\theoremstyle{plain}
\newtheorem{thm}{Theorem}[section]
\newtheorem{lem}[thm]{Lemma}
\newtheorem{prop}[thm]{Proposition}
\newtheorem{cor}[thm]{Corollary}
\theoremstyle{definition}
\newtheorem{eg}[thm]{Example}
\theoremstyle{remark}
\newtheorem{rmk}[thm]{Remark}
\def\Z{{\mathbf Z}}
\def\Q{{\mathbf Q}}
\def\C{{\mathbf C}}
\def\Qalg{\overline{\mathbf Q}}
\def\A{{\mathbf A}}
\def\cJ{\mathcal{J}}
\def\cM{\mathcal{M}}
\def\cO{\mathcal{O}}
\def\J{\mathcal{J}}
\def\.{\cdot}
\def\^{\widehat}
\def\({\left(}
\def\){\right)}
\newcommand{\llbracket}{[\negthinspace[}
\newcommand{\rrbracket}{]\negthinspace]}
\renewcommand{\and}{ \ \ \text{ and } \ \ }
\DeclareMathOperator{\Spec} {Spec}
\DeclareMathOperator{\ord} {ord}
\DeclareMathOperator{\lct} {lct}
\DeclareMathOperator{\moi}{moi}
\DeclareMathOperator{\Vol}{vol}
\def\oi{{_K\rm{oi}}}
\begin{document}

\author[R.~Cluckers]
{Raf Cluckers}
\address{Universit\'e de Lille, Laboratoire Painlev\'e, CNRS - UMR 8524, Cit\'e Scientifique, 59655
Villeneuve d'Ascq Cedex, France, and
KU Leuven, Department of Mathematics,
Celestijnenlaan 200B, B-3001 Leu\-ven, Bel\-gium}
\email{Raf.Cluckers@univ-lille.fr}
\urladdr{http://rcluckers.perso.math.cnrs.fr/}

\author[M.~Musta{\c{t}}{\u{a}}]{Mircea Musta{\c{t}}{\u{a}}}
\address{Department of Mathematics, University of Michigan, 530 Church Street, Ann Arbor, MI 48109, USA}
\email{mmustata@umich.edu}

\thanks{R.C. was partially supported by the European Research Council under the European Community's Seventh Framework Programme (FP7/2007-2013) with ERC Grant Agreement nr. 615722
MOTMELSUM, by the Labex CEMPI  (ANR-11-LABX-0007-01), and by KU Leuven IF C14/17/083. M.M. was partially supported by NSF grant DMS-1701622.}

\subjclass[2010]{14B05, 14E18, 14J17, 11L07}

\begin{abstract}
We show that if $f$ is a nonzero, noninvertible function on a smooth complex variety $X$ and $J_f$ is the Jacobian
ideal of $f$, then $\lct(f,J_f^2)>1$ if and only if the hypersurface defined by $f$ has rational singularities. Moreover,
if this is not the case, then $\lct(f,J_f^2)=\lct(f)$. We give two proofs, one relying on arc spaces and one that
goes through the inequality
$\widetilde{\alpha}_f\geq\lct(f,J_f^2)$, where $\widetilde{\alpha}_f$ is the minimal exponent of $f$.
In the case of a polynomial over $\overline{\Q}$, we also prove an analogue of this latter inequality,
with $\widetilde{\alpha}_f$  replaced by the motivic oscillation index $\moi(f)$.
\end{abstract}

\title{An invariant detecting rational singularities via the log canonical threshold}

\maketitle

\section{Introduction}

Given a smooth complex algebraic variety $X$ and a (nonempty) hypersurface $H$ in $X$ defined by $f\in\cO_X(X)$, the log canonical
threshold $\lct(f)$ measures how far the pair $(X,H)$ is from having log canonical singularities. In particular, we always have
$\lct(f)\leq 1$, with equality if and only if the pair $(X,H)$ is log canonical. The log canonical threshold $\lct({\mathfrak a})$ can be defined
more generally for every nonzero coherent ideal ${\mathfrak a}$ of $\cO_X$, with the convention that $\lct(\cO_X)=\infty$.
For an introduction to singularities of pairs in our setting, we refer the reader to \cite[Chapter~9]{Lazarsfeld}.

The main point of this note is that one can use the log canonical
threshold of an ideal associated to $f$ in order to refine $\lct(f)$, so that we detect when the hypersurface $H$ has rational singularities.
Namely, we consider the Jacobian ideal $J_f$ of $f$ and the log canonical threshold $\lct(f,J_f^2)$ of the ideal $(f)+J_f^2$.
We show that $\lct(f,J_f^2)>1$ if and only if the hypersurface $H$ has rational singularities. More precisely, we have the following:

\begin{thm}\label{thmA_intro}
For every smooth, complex algebraic variety $X$, and every nonzero, noninvertible $f\in \cO_X(X)$ defining the hypersurface
$H$ in $X$, the following hold:
\begin{enumerate}
\item[i)] If $H$ does not have rational singularities, then
$$\lct(f,J_f^2)=\lct(f).$$
In particular, we have $\lct(f,J_f^2)\leq 1$.
\item[ii)] If $H$ has rational singularities, then $\lct(f,J_f^2)>1$.
\end{enumerate}
\end{thm}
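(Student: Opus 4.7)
The trivial monotonicity $\lct(f) \leq \lct(f, J_f^2)$, arising from $(f) \subseteq (f) + J_f^2$, reduces part (i) to the reverse inequality $\lct(f, J_f^2) \leq \lct(f)$, and the final clause $\lct(f, J_f^2) \leq 1$ then comes free from $\lct(f) \leq 1$. Part (ii), by contrast, requires a genuine strict lower bound. My plan is to handle (i) via the minimal exponent and (ii) via arc spaces.

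For part (i), the core input is the promised inequality $\widetilde{\alpha}_f \geq \lct(f, J_f^2)$, where $\widetilde{\alpha}_f$ is the minimal exponent of $f$. I would combine this with two classical facts: by \Kollar, $-\lct(f)$ is always a root of the Bernstein--Sato polynomial $b_f(s)$, so whenever $\lct(f) < 1$ it is also a root of the reduced polynomial $b_f(s)/(s+1)$, forcing $\widetilde{\alpha}_f \leq \lct(f)$ and hence $\widetilde{\alpha}_f = \lct(f)$; and by Saito, $\widetilde{\alpha}_f > 1$ is equivalent to $H$ having rational singularities. Together these imply $\widetilde{\alpha}_f = \lct(f)$ whenever $H$ is not rational, and sandwiching gives
$$\lct(f) \;\leq\; \lct(f, J_f^2) \;\leq\; \widetilde{\alpha}_f \;=\; \lct(f),$$
proving (i).

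For part (ii) the minimal-exponent bound is no longer useful, since now $\widetilde{\alpha}_f > 1$. Instead I would use the arc-space formula
$$\lct\bigl((f) + J_f^2\bigr) \;=\; \inf_{m \geq 1} \frac{\codim_{J_\infty X} \Cont^{\geq m}\bigl((f) + J_f^2\bigr)}{m}$$
together with the factorization $\Cont^{\geq m}((f) + J_f^2) = \Cont^{\geq m}(f) \cap \Cont^{\geq \lceil m/2 \rceil}(J_f)$. Since $H$ has rational singularities, \Mustata's theorem on jet schemes of locally complete intersections yields $\dim J_{m-1}(H) = m(n-1)$ for every $m \geq 1$, where $n = \dim X$, so $\codim \Cont^{\geq m}(f) = m$. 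The goal is then to show that imposing the additional condition $\ord_\gamma(J_f) \geq \lceil m/2 \rceil$ cuts the dimension of $J_{m-1}(H)$ by a positive fraction of $m$ uniformly in $m$, forcing the infimum to strictly exceed $1$.

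The main obstacle is this uniform-in-$m$ quantification: a pointwise codimension bonus of a single $1$ gives only $\lct \geq 1 + 1/m \to 1$, which is insufficient. The natural strategy is to stratify $J_{m-1}(H)$ by the contact order with the subscheme defined by $J_f$, exploiting that a generic arc of $J_{m-1}(H)$ meets the smooth locus of $H$---where $J_f$ is the unit ideal---by the irreducibility assertion of \Mustata's theorem, and to combine this with a fiber-dimension estimate for the truncation $J_{m-1}(H) \to J_{\lceil m/2\rceil - 1}(X)$. Alternatively, one could work on a log resolution of $(f) \cdot J_f$ and argue that the plt-type discrepancy inequality on exceptional divisors forced by rationality of $H$ is strong enough to dominate the Jacobian contribution $2\ord_E(J_f)$ in the threshold computation.
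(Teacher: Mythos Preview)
Your approach to (i) matches the paper's: the inequality $\widetilde{\alpha}_f \geq \lct(f, J_f^2)$ from Theorem~\ref{thmB_intro}, together with Saito's criterion and Lichtin--Koll\'ar, sandwiches $\lct(f, J_f^2)$ between $\lct(f)$ and $\widetilde{\alpha}_f = \lct(f)$. (Your handling of the boundary case $\lct(f) = 1$ is slightly roundabout; it is cleaner to note that $H$ not rational gives $\widetilde{\alpha}_f \leq 1$ by Saito, whence $\lct(f) = \min\{\widetilde{\alpha}_f, 1\} = \widetilde{\alpha}_f$ directly.)

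For (ii), however, you are working much too hard, and your primary arc-space approach has a real gap: you correctly identify the obstacle---a uniform-in-$m$ codimension gain from the Jacobian condition---but neither your stratification sketch nor the fiber-dimension idea actually delivers it, and your proposal ends without a proof. In fact (ii) is the \emph{easy} direction. Your ``alternative'' at the very end is essentially the paper's argument, but you are missing the key simplification that makes it a two-line computation rather than an estimate. On a log resolution $\pi\colon Y \to X$ of both $(X, H)$ and the ideal $(f) + J_f^2$, any prime divisor $E$ with $\ord_E(f, J_f^2) > 0$ satisfies $\ord_E(J_f) > 0$ (since $\ord_E(f, J_f^2) = \min\{\ord_E(f), 2\,\ord_E(J_f)\}$), and is therefore $\pi$-exceptional because $H$, being rational, is reduced. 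Now rationality of $H$ is equivalent, via Elkik and Stevens, to the pair $(X, H)$ being canonical, which for exceptional $E$ reads $A_X(\ord_E) \geq \ord_E(f) + 1$. Since trivially $\ord_E(f) \geq \ord_E(f, J_f^2)$, one gets
\[
A_X(\ord_E) \;\geq\; \ord_E(f, J_f^2) + 1
\]
for every relevant $E$, and with only finitely many such $E$ on the fixed resolution this yields $\lct(f, J_f^2) > 1$ at once. There is no need to ``dominate the Jacobian contribution $2\,\ord_E(J_f)$'' separately: the minimum defining $\ord_E(f, J_f^2)$ is already bounded above by $\ord_E(f)$, which is exactly what the canonical condition controls.
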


Another invariant that refines $\lct(f)$ and detects whether $H$ has rational singularities is the \emph{minimal exponent} $\widetilde{\alpha}_f$,
see \cite{Saito-B}. When $H$ has isolated singularities, this has been also known as the \emph{complex singularity index} of $H$. In general,
it is defined as the negative of the largest root of $b_f(s)/(s+1)$, where $b_f(s)$ is the Bernstein-Sato polynomial of $f$ (with the convention
that if $f$ defines a smooth hypersurface, in which case $b_f(s)=s+1$, then $\widetilde{\alpha}_f=\infty$).
It is a result of Lichtin and
Koll\'{a}r (see \cite[Theorem~1.6]{Kollar}) that $\lct(f)=\min\{\widetilde{\alpha}_f,1\}$ and it was shown by Saito (see \cite[Theorem~0.4]{Saito-B})
that $\widetilde{\alpha}_f>1$ if and only if $H$ has rational singularities. We thus see that $\widetilde{\alpha}_f$ behaves like $\lct(f,J_f^2)$.
We prove the following general inequality between these two invariants:

\begin{thm}\label{thmB_intro}
For every smooth, complex algebraic variety $X$, and every nonzero, noninvertible $f\in \cO_X(X)$, we have
$$\widetilde{\alpha}_f\geq\lct(f,J_f^2).$$
\end{thm}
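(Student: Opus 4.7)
My plan is to prove the inequality by combining a log-resolution expression for $\lct(f, J_f^2)$ with a $\cD$-module / $V$-filtration analysis of $\widetilde\alpha_f$. The key new ingredient beyond the standard inequality $\widetilde\alpha_f \ge \lct(f)$ is a chain-rule identity relating second-order derivatives of $f$ to elements of $J_f^2$.

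I first take a log resolution $\mu : Y \to X$ of $(X, f \cdot J_f)$, isomorphic over $X \setminus \Sing(H)$, and write
\[
\mu^* f = \widetilde H + \textstyle\sum_i a_i E_i, \quad K_{Y/X} = \textstyle\sum_i k_i E_i, \quad J_f \cdot \O_Y = \O_Y\!\bigl(-\textstyle\sum_i d_i E_i\bigr) \cdot I,
\]
with $\widetilde H$ the strict transform of $H$ and $I$ cosupported on $\widetilde H$. The standard formula for the log canonical threshold of a sum of ideals then gives
\[
\lct(f, J_f^2) = \min_i \frac{k_i + 1}{\min(a_i,\, 2 d_i)}.
\]
Setting $c_0 := (k_{i_0}+1)/\min(a_{i_0}, 2 d_{i_0})$ for a divisor $E_{i_0}$ achieving the minimum, the theorem reduces to proving $\widetilde\alpha_f \ge c_0$.

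Next I would translate this into a $V$-filtration statement. Let $\iota : X \hookrightarrow X \times \A^1$ be the graph embedding of $f$, $B_f := \iota_+ \O_X$ the cyclic $\cD$-module carrying a $V$-filtration $V^\bullet B_f$ along $X \times \{0\}$, and $\delta_f$ its canonical generator, so that $\widetilde\alpha_f = \sup\{c : \delta_f \in V^c B_f\}$. I would split on the value of $\min(a_{i_0}, 2 d_{i_0})$. In the case $\min(a_{i_0}, 2 d_{i_0}) = a_{i_0}$, I aim to recover $\delta_f \in V^{c_0} B_f$ by a local analysis at a generic point of $E_{i_0}$ on $Y$, essentially reproducing the Lichtin/Saito control of the roots of $b_f(s)$ coming from divisorial valuations. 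In the case $\min(a_{i_0}, 2 d_{i_0}) = 2 d_{i_0} < a_{i_0}$, I would pick $g \in J_f^2$ with $\ord_{E_{i_0}}(g) = 2 d_{i_0}$, decompose it as $g = \sum h_{\xi\eta}\, (\partial_\xi f)(\partial_\eta f)$, and exploit the chain-rule identity
\[
\partial_\xi \partial_\eta (f^{s+1}) = (s+1)s\, f^{s-1}\, (\partial_\xi f)(\partial_\eta f) + (s+1)\, f^s\, (\partial_\xi \partial_\eta f),
\]
which allows one to trade two applications of $\partial_t$ on the graph side for elements of $J_f^2$, producing an element of $V^{c_0} B_f$ that witnesses $\widetilde\alpha_f \ge c_0$.

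The hard part will be the $2 d_{i_0} < a_{i_0}$ case: converting the chain-rule identity into a precise $V$-filtration containment forcing the candidate root of the reduced Bernstein--Sato polynomial $\widetilde b_f(s)$ contributed by $E_{i_0}$ to lie at $\le -c_0$. This requires a careful local analysis at a generic point of $E_{i_0}$ on $Y$, exploiting that $J_f$ has exact vanishing order $d_{i_0}$ there. It is precisely the second-order nature of the chain-rule identity that pins down the exponent $2$ on $J_f^2$, since a higher-order identity would naturally yield a higher power of $J_f$ and a correspondingly different bound.
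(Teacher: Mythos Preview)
Your approach is genuinely different from the paper's, but there is a real gap. The core problem is that your case split hinges on a \emph{single} divisor $E_{i_0}$, whereas both the Lichtin--Saito bound and the $V$-filtration level of $\delta_f$ are global. Concretely, in your Case~1 you want $\widetilde\alpha_f\ge c_0=(k_{i_0}+1)/a_{i_0}$, but the Lichtin--Koll\'ar/Saito argument only yields $\widetilde\alpha_f\ge\min_i(k_i+1)/a_i$, and there may well be another divisor $E_j$ with $2d_j<a_j$ for which $(k_j+1)/a_j<c_0$ (this is perfectly consistent with $(k_j+1)/(2d_j)\ge c_0$). A ``local analysis at a generic point of $E_{i_0}$'' cannot rule out such $E_j$: the $V$-filtration level of $\delta_f$ is not read off from any one divisor on a resolution. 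The same objection hits Case~2: even if your chain-rule identity lets you control what $E_{i_0}$ contributes, you have said nothing about the other divisors, and it is their aggregate that governs $\widetilde\alpha_f$. (A smaller point: a log resolution of $f\cdot J_f$ need not principalize $(f)+J_f^2$, so your displayed formula for $\lct(f,J_f^2)$ is in general only an upper bound---harmless for the direction you want, but the claimed equality is unjustified.)

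The paper avoids the divisor-by-divisor trap entirely by a deformation argument. After reducing to an isolated singularity at $P$, it considers the family $g_\lambda=f+\sum_{i,j}\lambda_{ij}(\partial_{x_i}f)(\partial_{x_j}f)$. The product rule gives $\partial_{x_k}g_\lambda-\partial_{x_k}f\in J'_f$, hence $J'_{g_\lambda}\subseteq J'_f$ and the Milnor number is constant on a neighborhood of $\lambda=0$; Varchenko's $\mu$-constant theorem then forces $\widetilde\alpha_{g_\lambda,P}=\widetilde\alpha_{f,P}$ there. For \emph{general} $\lambda$, $g_\lambda$ is a general linear combination of generators of $(f)+(J'_f)^2$, and a separate proposition (using \cite{MP}) gives $\widetilde\alpha_{g_\lambda,P}\ge\lct_P\big((f)+J_f^2\big)$. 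Your second-order differential identity is morally in the same spirit as the inclusion $J'_{g_\lambda}\subseteq J'_f$, but the paper packages it as a $\mu$-constant deformation rather than a direct $V$-filtration computation, and that is exactly what circumvents the global-vs-local obstruction your plan runs into.
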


We prove Theorem~\ref{thmB_intro} by using results on minimal exponents from \cite{MP} and a theorem of Varchenko
saying that in a family of isolated singularities with constant Milnor number, the minimal exponent is constant.
Regarding Theorem~\ref{thmA_intro}, we note that the interesting assertion is the one in i), as the one in ii) follows easily from known
properties of rational singularities. Part i) is a consequence of the result in Theorem~\ref{thmB_intro}; however,
we give a second proof using arc spaces. This has the advantage that it also gives
the assertion below concerning divisorial valuations. A \emph{divisorial valuation} is a valuation of the form $v=q\cdot {\rm ord}_E$,
where $E$ is a divisor on a normal variety that has a birational morphism to $X$ and $q$ is a positive integer; we denote by $A_X(v)$
the log discrepancy of $v$ and by $c_X(v)$ the center of $v$ on $X$ (for definitions, see Section~\ref{first_proof}).

\begin{thm}\label{thm3_intro}
Let $X$ be a smooth, affine, complex algebraic variety, and $f\in\cO_X(X)$ a nonzero function.
If $v$ is a divisorial valuation on $X$ such that
$$0<v(J_f)<\frac{1}{2}v(f),$$
then there is a divisorial valuation $w$ on $X$ that satisfies the following conditions:
\begin{enumerate}
\item[i)] $w(g)\leq v(g)$ for every $g\in\cO_X(X)$,
\item[ii)] $w(f)\geq v(f)-1$,
\item[iii)] $A_X(w)\leq A_X(v)-1$, and
\item[iv)] $c_X(w)=c_X(v)$.
\end{enumerate}
\end{thm}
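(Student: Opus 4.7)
The plan is to pass to the arc space $J_\infty(X)$ and manufacture $w$ by a one-parameter deformation of the cylinder attached to $v$. The central tool is the Ein--Lazarsfeld--Musta\c{t}\u{a} correspondence: to $v$ one associates a fat irreducible cylinder $C_v\subseteq J_\infty(X)$ with $\codim(C_v,J_\infty(X))=A_X(v)$ whose generic arc $\gamma_0$ satisfies $v(g)=\ord_t g(\gamma_0)$ for every $g\in\cO_X(X)$ and $\gamma_0(0)=c_X(v)$; conversely, the order of vanishing along the generic arc of any fat irreducible cylinder $C$ is a divisorial valuation of log discrepancy at most $\codim C$.

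Set $m:=v(f)$, $e:=v(J_f)$, and $k:=m-e-1$; the assumption $0<e<m/2$ forces $m\ge 2e+1\ge 3$, hence $k\ge 1$. After shrinking $X$ around $c_X(v)$ and choosing \'etale coordinates $x_1,\dots,x_n$, there is an index $i$ with $v(\partial f/\partial x_i)=e$; fix such an $i$. For a generic arc $\gamma_0\in C_v$ and $c\in\C$, define
$$\gamma_c(t):=\gamma_0(t)+c\,t^k\,e_i\in J_\infty(X).$$
Since $k\ge 1$ we have $\gamma_c(0)=\gamma_0(0)=c_X(v)$. Taylor expanding $f$ in the $i$-th coordinate gives
$$f(\gamma_c)=f(\gamma_0)+c\,t^k\,(\partial f/\partial x_i)(\gamma_0)+\sum_{j\ge 2}\frac{(c t^k)^j}{j!}\,(\partial^j f/\partial x_i^j)(\gamma_0),$$
whose three blocks have $t$-orders $m$, $k+e=m-1$, and $\ge 2k\ge m-1$, respectively. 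The $t^{m-1}$-coefficient of the right-hand side is a nonzero polynomial in $c$, because its linear-in-$c$ term equals the nonzero leading coefficient of $(\partial f/\partial x_i)(\gamma_0)$ at $t^e$. Therefore $\ord f(\gamma_c)=m-1$ for generic $c$, and the analogous expansion for any $g\in\cO_X(X)$ yields $\ord g(\gamma_c)\le \ord g(\gamma_0)=v(g)$.

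Let $C_w\subseteq J_\infty(X)$ be the Zariski closure of $\{\gamma_c:\gamma_0\in C_v,\,c\in\C\}$, and let $w$ be the divisorial valuation determined by its generic arc. Then $C_w$ is an irreducible cylinder that properly contains $C_v$ (arcs with $c\ne 0$ have $\ord f=m-1<m$ and thus lie outside $C_v$), and at any jet level $N\ge k$ the map $(\gamma_0,c)\mapsto \pi_N(\gamma_c)$ is generically injective, since the $t^k$-coefficient of the $i$-th coordinate recovers $c$, after which $\gamma_0$ is determined by subtraction. Therefore $\codim C_w=\codim C_v-1=A_X(v)-1$, giving $A_X(w)\le A_X(v)-1$. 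Fatness of $C_w$, needed to guarantee $w$ is divisorial, follows from fatness of $C_v\subseteq C_w$. Properties (i)--(iv) now fall out: (i) and (ii) from the Taylor computation of the previous paragraph, (iii) from this codimension estimate, and (iv) from $\gamma_c(0)=c_X(v)$.

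The principal difficulty is the codimension analysis of $C_w$ and the verification that it corresponds to a single divisorial valuation under Ein--Lazarsfeld--Musta\c{t}\u{a}. This is exactly where the strict inequality $2e<m$ is needed: combined with $k=m-e-1$ it yields $2k\ge m-1=k+e$, so the linear-in-$c$ term in the Taylor expansion of $f(\gamma_c)$ at order $t^{m-1}$ cannot be cancelled by higher-order contributions, and moving $c$ therefore genuinely moves $\gamma_c$ transversely out of $C_v$ rather than sliding along it.
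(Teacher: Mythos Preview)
Your proof follows essentially the same strategy as the paper's: pass to the arc space via the Ein--Lazarsfeld--Musta\c{t}\u{a} correspondence, and use the Taylor expansion of $f$ under a deformation at level $t^{m-e-1}$ to show that the cylinder $C(v)$ sits properly inside a larger irreducible closed cylinder on which $\ord(f)\geq m-1$. The paper packages this slightly differently---it defines the locus $C'=\Cont^{\geq m-1}(f)\cap\Cont^e(J_f)\cap\psi_0^{-1}(Z)$, deforms in all $n$ coordinate directions at once, and shows that $C(v)\cap\Cont^e(J_f)$ is not an irreducible component of $C'$---but the core computation is the same.

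One correction: your generic-injectivity argument for $(\gamma_0,c)\mapsto\pi_N(\gamma_c)$ is wrong. The $t^k$-coefficient of the $i$-th coordinate of $\gamma_c$ is $(\gamma_0)_{i,k}+c$, not $c$, so you cannot recover $c$ without already knowing $\gamma_0$. Fortunately this step is unnecessary. You have already shown that $C_w$ is an irreducible closed cylinder properly containing $C_v$ (a generic $\gamma_c$ has $\ord_{\gamma_c}(f)=m-1$, while every arc in $C_v$ has order $\geq m$ along $f$), and proper containment of irreducible closed cylinders forces $\codim(C_w)\leq\codim(C_v)-1$ at the finite jet level; combined with $A_X(w)\leq\codim(C_w)$ from the correspondence, this gives (iii). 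Similarly, your Taylor argument for (i) is superfluous and not quite correct as stated: the inclusion $C_v\subseteq C_w$ directly yields
\[
w(g)=\min_{\gamma\in C_w}\ord_\gamma(g)\leq\min_{\gamma\in C_v}\ord_\gamma(g)=v(g).
\]
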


As a consequence of this theorem, we obtain the following result
concerning multiplier ideals, which in turn immediately implies Theorem~\ref{thmA_intro}:

\begin{cor}\label{corC_intro}
For every smooth, complex algebraic variety $X$, and every nonzero, noninvertible $f\in \cO_X(X)$, we have
$$\cJ(X,f^{\lambda})=\cJ\big((f,J_f^2)^{\lambda}\big)\quad\text{for all}\quad \lambda<1.$$
Moreover, if $f$ defines a reduced hypersurface, then
$${\rm adj}(f)=\cJ(f,J_f^2),$$
where ${\rm adj}(f)$ is the adjoint ideal of $f$.
\end{cor}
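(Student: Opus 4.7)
The plan is to use the divisorial-valuation description of multiplier and adjoint ideals, combined with an iterated application of Theorem~\ref{thm3_intro}. Recall that $g\in\cJ(\fa^\lambda)$ if and only if $v(g)+A_X(v)>\lambda\cdot v(\fa)$ for every divisorial valuation $v$ on $X$, and that when $f$ defines a reduced hypersurface $H$, one has $g\in\adj(f)$ if and only if $v(g)+A_X(v)>v(f)$ for every divisorial valuation $v$ with $c_X(v)\subseteq\Sing(H)$; the remaining divisorial valuations impose no condition on $\adj(f)$, either because $v(f)=0$ or because $(X,H)$ is purely log terminal along the smooth locus $H\setminus\Sing(H)$. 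Since both assertions in the corollary are local on $X$, I may restrict to an affine cover so that Theorem~\ref{thm3_intro} is available.

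For the first equality, the inclusion $\cJ(f^\lambda)\subseteq\cJ((f,J_f^2)^\lambda)$ is immediate from $v((f,J_f^2))=\min\{v(f),2v(J_f)\}\leq v(f)$. For the converse, I would fix $g\in\cJ((f,J_f^2)^\lambda)$ and a divisorial $v$, and split into three cases. If $c_X(v)\not\subseteq\Sing(H)$, either $v(f)=0$ (trivial) or the center lies in $H\setminus\Sing(H)$, in which case $(X,\lambda H)$ is klt in a neighborhood for $\lambda<1$ and the inequality $A_X(v)>\lambda v(f)$ holds automatically. If $v(J_f)\geq v(f)/2$, then $v((f,J_f^2))=v(f)$ and the required inequality is the hypothesis applied to $v$. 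If $0<v(J_f)<v(f)/2$, Theorem~\ref{thm3_intro} produces a divisorial $w$ with $w(g)\leq v(g)$, $w(f)\geq v(f)-1$, $A_X(w)\leq A_X(v)-1$, and $c_X(w)=c_X(v)\subseteq\Sing(H)$. The key observation is that the invariance of the center keeps $w$ in the second or third case, so one may iterate; since $A_X(\cdot)>0$ on divisorial valuations and drops by at least one per step, the process terminates after some $k\geq 1$ iterations in the second case. Unwinding gives
$$v(g)+A_X(v)\geq w(g)+A_X(w)+k>\lambda w(f)+k\geq\lambda(v(f)-k)+k=\lambda v(f)+(1-\lambda)k>\lambda v(f),$$
where $\lambda<1$ is used decisively in the final step.

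The adjoint equality follows by the same machinery with $\lambda=1$. The inclusion $\adj(f)\subseteq\cJ((f,J_f^2))$ is straightforward: divisorial $v$ with $v(J_f)=0$ have $v((f,J_f^2))=0$ and so impose no condition on either ideal, while those with $c_X(v)\subseteq\Sing(H)$ satisfy $v((f,J_f^2))\leq v(f)$. Conversely, if $g\in\cJ((f,J_f^2))\setminus\adj(f)$, there exists $v$ with $c_X(v)\subseteq\Sing(H)$ and $v(g)+A_X(v)\leq v(f)$; combined with the hypothesis this forces $2v(J_f)<v(f)$, so Theorem~\ref{thm3_intro} yields $w$ satisfying $w(g)+A_X(w)\leq v(f)-1\leq w(f)$ with the same center. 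Iterating until $2w(J_f)\geq w(f)$ (termination guaranteed by the strict decrease of $A_X$), the $\cJ((f,J_f^2))$-hypothesis gives $w(g)+A_X(w)>w(f)$, contradicting the standing inequality $w(g)+A_X(w)\leq w(f)$. The principal technical point in both arguments is verifying that the trichotomy restarts at each iterate of Theorem~\ref{thm3_intro}; this rests on the invariance $c_X(w)=c_X(v)$, which keeps one inside $\Sing(H)$ and hence ensures $w(J_f)>0$, while the slack $(1-\lambda)k$, respectively the unit slack in the adjoint case, absorbs the losses accumulated through the iteration.
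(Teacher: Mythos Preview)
Your proposal is correct and relies on the same ingredients as the paper's proof: the divisorial description of multiplier and adjoint ideals, the trichotomy on $v(J_f)$ versus $\tfrac{1}{2}v(f)$, and Theorem~\ref{thm3_intro} to handle the case $0<v(J_f)<\tfrac{1}{2}v(f)$. The one difference is in how the descent is organized: you iterate Theorem~\ref{thm3_intro} until reaching the case $2w(J_f)\geq w(f)$ and absorb the accumulated losses with the slack $(1-\lambda)k$, whereas the paper argues by minimal counterexample---it selects a witness $\ord_E$ to $g\notin\cJ(f^\lambda)$ (respectively $g\notin\adj(f)$) with $A_X(\ord_E)$ minimal, applies Theorem~\ref{thm3_intro} a single time, and checks that the output $w=q\cdot\ord_F$, after dividing by $q$, yields a strictly smaller witness $\ord_F$. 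These are interchangeable packagings of the same induction, and both invoke $\lambda<1$ at the analogous point.
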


There is another invariant that behaves like $\widetilde{\alpha}_f$ and $\lct(f,J_f^2)$, namely the \emph{motivic oscillation index}
$\moi(f)$ studied in \cite{CMN}. This is defined for polynomials $f\in \overline{\Q}[x_1,\ldots,x_n]$ and
it was shown in
\cite[Proposition~3.10]{CMN} that $\lct(f)=\min\{\moi(f),1\}$ and $\moi(f)>1$ if and only if the hypersurface defined by $f$ in $\A_{\overline{\Q}}^n$
has rational singularities.

In fact, a more refined version $\moi_Z(f)$ of the motivic oscilation index also involves a closed subscheme $Z$ of $\A^n_{\overline{\Q}}$
(the one we referred to in the previous paragraph corresponds to the case when $Z$ is the hypersurface defined by $f$).
We recall the precise definition of $\moi_Z(f)$ in Section~\ref{moi}. We only mention now that if $f\in\Z[x_1,\ldots,x_n]$ and
$Z=\A^n_{\overline{\Q}}$, then $\moi_Z(f)$ relates to finite exponential sums over integers modulo $p^m$ (for primes $p$ and integers $m>0$)
of the form
$$
E(p^m) := \frac{1}{p^{mn}}\sum_{x\in(\Z/p^m\Z)^n} \exp \left(2\pi i \frac{f(x)}{p^m}\right)
$$
and to certain limit values of all possible $\sigma\geq 0$ such that
$$
|E(p^m)|  \ll p^{-m\sigma },
$$
with an implicit constant independent from $m$.
These limits are taken carefully, using in fact finite field extensions and large primes $p$, as will be described in Section~\ref{moi}.
We have the following inequality between $\moi(f)$ and $\lct(f,J_f^2)$:

\begin{thm}\label{thm_moi1}
For every nonconstant $f\in{\overline{\Q}}[x_1,\ldots,x_n]$, we have
\begin{equation}\label{eq_thm_moi1}
\moi(f)\geq\lct(f,J_f^2).
\end{equation}
Moreover, if $\lct(f,J_f^2)\leq 1$, then we have equality in (\ref{eq_thm_moi1}).
\end{thm}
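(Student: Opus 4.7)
The plan is to split the argument according to Theorem \ref{thmA_intro}, which tells us that $\lct(f,J_f^2)\leq 1$ or $\lct(f,J_f^2)>1$ corresponds exactly to whether the hypersurface $H$ defined by $f$ in $\A^n_{\overline{\Q}}$ has non-rational or rational singularities. I will combine this with the two properties of $\moi(f)$ mentioned in the Introduction, namely that $\lct(f)=\min\{\moi(f),1\}$ and that $\moi(f)>1$ if and only if $H$ has rational singularities.

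First suppose that $\lct(f,J_f^2)\leq 1$. By Theorem \ref{thmA_intro}(ii), $H$ does not have rational singularities, so by \cite[Proposition~3.10]{CMN} we have $\moi(f)\leq 1$, and hence $\lct(f)=\min\{\moi(f),1\}=\moi(f)$. Combined with Theorem \ref{thmA_intro}(i), which gives $\lct(f,J_f^2)=\lct(f)$ in this regime, we conclude $\moi(f)=\lct(f,J_f^2)$. This simultaneously establishes the inequality \eqref{eq_thm_moi1} and the ``moreover'' clause in the case $\lct(f,J_f^2)\leq 1$.

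When $\lct(f,J_f^2)>1$, the hypersurface $H$ has rational singularities, so in particular $\moi(f)>1$, and both sides of \eqref{eq_thm_moi1} are strictly greater than $1$. A genuine argument is needed here, and my plan is to mirror the proof of Theorem \ref{thmB_intro}: reduce to the case of isolated singularities via a generic hyperplane-section argument (using analogous restriction results for $\moi$ in the spirit of how results from \cite{MP} are used for $\widetilde{\alpha}_f$), and then work inside a family with constant Milnor number. In such a family Varchenko's theorem gives $\widetilde{\alpha}_f$ is constant; the corresponding statement that $\moi(f)$ is constant (or at worst lower semicontinuous) would allow one to specialize to a ``generic'' member, where both $\moi(f)$ and $\lct(f,J_f^2)$ can be read off from a common log resolution of $(f)+J_f^2$.

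The main obstacle is precisely the family-invariance (or semicontinuity) property of $\moi$ in a Milnor-constant family. Whereas for $\widetilde{\alpha}_f$ this is the classical theorem of Varchenko, the analogous fact for $\moi$ must be produced from uniform estimates on the $p$-adic exponential sums $E(p^m)$ across the family, for all sufficiently large primes $p$ and in large enough residue-field extensions, as occur in the precise definition of $\moi(f)$ recalled in Section~\ref{moi}. The cleanest way I see to implement this is via motivic integration on a log resolution of the ideal $(f)+J_f^2$: writing $\pi^{*}(f)$ and $\pi^{*}(J_f^2)$ as monomial ideals whose exponents along each exceptional divisor $E_i$ control both the candidate log canonical threshold $\frac{k_i+1}{\max(a_i,b_i/2)}$ and the local contribution to $|E(p^m)|$, one expects a uniform bound of the shape $|E(p^m)|\ll p^{-m\lct(f,J_f^2)+\epsilon}$, which is exactly what is needed to conclude $\moi(f)\geq \lct(f,J_f^2)$. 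Producing this bound, with the exponent matching $\lct(f,J_f^2)$ rather than the cruder $\lct(f)$, is the technical heart of the proof.
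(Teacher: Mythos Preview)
Your treatment of the case $\lct(f,J_f^2)\leq 1$ is correct and is essentially the paper's argument for the equality clause: it combines Theorem~\ref{thmA_intro} with \cite[Proposition~3.10]{CMN}.

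The case $\lct(f,J_f^2)>1$, however, has a genuine gap. Your primary plan---reduce to isolated singularities and invoke a Varchenko-type invariance of $\moi$ in a $\mu$-constant family---rests on a statement that is not available: no analogue of Varchenko's theorem for $\moi$ is known, and establishing even a semicontinuity version via uniform $p$-adic estimates across a family would be a substantial result in its own right, not a routine step. The paper does not go this route at all.

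What the paper actually does is closer to your final sentence, but with a specific and elementary mechanism you have not identified. The inequality $\moi(f)\geq\lct(f,J_f^2)$ is proved \emph{uniformly} (for all $f$, not just when $\lct(f,J_f^2)>1$) via a direct bound on the oscillatory integrals $E^{Z}_{f,L,\psi}$ (Proposition~\ref{prop:E^ZfJf}). The key point is that the Taylor expansion of $f$ around a point $x_0$, combined with orthogonality of additive characters, kills the contribution of any $x_0$ with $\ord\big(J_f^2(x_0)\big)<m-1$: on a ball of radius $q_L^{-\lceil m/2\rceil}$ around such $x_0$, the function $f$ is congruent modulo $\cM_L^m$ to a nonconstant affine function, and the integral of $\psi$ against it vanishes. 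Hence the integral in (\ref{eq:EfZ}) is supported on $\{\ord\big((f,J_f^2)(x)\big)\geq m-1\}$, not merely on $\{\ord f(x)\geq m-1\}$; bounding $|\psi|=1$ then reduces the estimate to the volume of this contact locus, which is governed by $\lct(f,J_f^2)$ via standard results (the paper cites \cite{VeysZ}). No families, no semicontinuity, and no divisor-by-divisor analysis on a log resolution are needed. In spirit this is the $p$-adic counterpart of the arc-space argument in Section~\ref{second_proof}: in both cases a Taylor expansion shows that imposing high contact with $f$ already forces high contact with $J_f^2$, up to a codimension-one correction.
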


Note that by Theorem~\ref{thmA_intro}, we have $\lct(f,J_f^2)\leq 1$ if and only if the hypersurface defined by $f$
does not have rational singularities. In fact, one can give a third proof of part i) of Theorem ~\ref{thmA_intro} by combining 
Theorem~\ref{thm_moi1} and the mentioned assertions in \cite[Proposition~3.10]{CMN}; we leave the details to the reader.

We will prove a more general version of the above theorem, allowing for a subset
 $Z$ of the zero-locus of $f$ (see Theorem~\ref{thm_moi2} below).
A related intriguing question is whether we always have  $\widetilde{\alpha}_f=\moi(f)$. However, investigating this
seems to require new ideas.

The paper is organized as follows. In Section~\ref{first_proof}, we prove Theorem~\ref{thmB_intro} and deduce
Theorem~\ref{thmA_intro}. In Section~\ref{second_proof}, after reviewing some basic facts about the connection between valuations and contact loci
in arc spaces, we prove Theorem~\ref{thm3_intro}, deduce Corollary~\ref{corC_intro}, and obtain a second proof of Theorem~\ref{thmA_intro}.
In Section~\ref{examples}, we give two examples. We show that for generic determinantal hypersurfaces, the inequality in
Theorem~\ref{thmB_intro} is an equality, and we describe when this inequality is strict in the case of homogeneous diagonal hypersurfaces.
Finally, in Section~\ref{moi} we recall the definition of the motivic oscillation index and prove the general version of Theorem~\ref{thm_moi1}.

\subsection{Acknowledgments}
We would like to thank Mattias Jonsson, Johannes Nicaise and Mihai P\u{a}un for useful discussions and to Nero Budur for his comments on an earlier version
of this paper.

\section{The inequality between $\widetilde{\alpha}_f$ and $\lct(f,J_f^2)$}\label{first_proof}

In what follows $X$ is a smooth (irreducible) $n$-dimensional, complex algebraic variety.
For basic facts about log canonical thresholds and multiplier ideals we refer to \cite[Chapter~9]{Lazarsfeld}.
Let us begin by recalling some terminology and notation regarding valuations that will be used both in this section and the next one.

A \emph{divisorial valuation} on $X$
is a valuation of the function field $k(X)$ of $X$ of the form $q\cdot {\rm ord}_E$, where $q$ is a positive integer
and $E$ is a prime divisor on a normal variety $Y$ that has a birational morphism $g\colon Y\to X$
(here ${\rm ord}_E$ is the discrete valuation associated to $E$, with corresponding DVR $\cO_{Y,E}$, having
fraction field $k(Y)=k(X)$). After replacing $Y$ by a suitable log resolution of $(Y,E)$, we may always assume that
$Y$ is smooth and $E$ is a smooth prime divisor on $Y$. The \emph{center} of $q\cdot {\rm ord}_E$ on $X$
is the closure of $g(E)$ (which is independent of the model $Y$). The \emph{log discrepancy} of $q\cdot {\rm ord}_E$
is the positive integer
$$A_X(q\cdot {\rm ord}_E)=q\cdot \big({\rm ord}_E(K_{Y/X})+1\big),$$
where $K_{Y/X}$ is the effective divisor on $Y$ locally defined by the determinant of the Jacobian matrix of $g$.

We first give a proposition
concerning the minimal exponent of a general linear combination of the generators of an ideal.

\begin{prop}\label{gen_combination}
If $f_1,\ldots,f_r\in\cO_X(X)$ generate the proper nonzero coherent ideal ${\mathfrak a}$ of $\cO_X$ and
$f=\sum_{i=1}^r\lambda_if_i$, with $\lambda_1,\ldots,\lambda_r\in\C$ general, then we have
$$\widetilde{\alpha}_f\geq\lct({\mathfrak a}).$$
\end{prop}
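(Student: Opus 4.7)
The plan is to split the argument according to the value of $\lct(\fa)$. If $\lct(\fa)\leq 1$, the conclusion is essentially formal: by the classical fact that a general member of the linear system spanned by $f_1,\ldots,f_r$ has the same multiplier ideals as $\fa$ in the range $c<1$ (cf.\ \cite[Proposition~9.2.28]{Lazarsfeld}), for general $\lambda_1,\ldots,\lambda_r\in\C$ one has $\cJ(f^c)=\cJ(\fa^c)$ for every $c<1$, hence $\lct(f)=\min\{\lct(\fa),1\}=\lct(\fa)$. Combined with the Lichtin-Koll\'ar inequality $\widetilde{\alpha}_f\geq\lct(f)$ recalled in the introduction, this settles this case.

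The delicate case is $\lct(\fa)>1$, in which $\lct(f)=1$ is strictly smaller than $\lct(\fa)$ and the Lichtin-Koll\'ar bound does not suffice. My plan here is to combine a family argument with Varchenko's theorem on constancy of the minimal exponent in $\mu$-constant families. Working locally near a point of the zero locus $V(\fa)$ and cutting $X$ with a sufficiently general complete intersection $L\subseteq X$ of complementary dimension to the singular locus of the general $f_\lambda$, I would reduce to the case where $f_\lambda|_L$ has only isolated singularities on $L$. By Bertini, $\lct(\fa\cdot\cO_L)=\lct(\fa)$, and the restriction results of \cite{MP} for general hyperplane sections give $\widetilde{\alpha}_{f|_L}\leq\widetilde{\alpha}_f$, so proving the proposition on $L$ suffices. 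In the isolated-singularity setting, upper semicontinuity of the Milnor number produces a Zariski-dense open $U\subseteq\C^r$ on which $\mu(f_\lambda)$ is constant; Varchenko's theorem then yields that $\widetilde{\alpha}_{f_\lambda}$ is also constant on $U$. To identify this common value, the plan is to take a specialization $\lambda_0\in\overline{U}$ along which $f_{\lambda_0}$ admits a tractable description extracted from a log resolution of $\fa$, and combine lower semicontinuity of $\widetilde{\alpha}$ at $\lambda_0$ with a direct \cite{MP}-type minimal-exponent estimate to conclude $\widetilde{\alpha}_{f_{\lambda_0}}\geq\lct(\fa)$, and hence $\widetilde{\alpha}_{f_\lambda}\geq\lct(\fa)$ for $\lambda\in U$.

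The main obstacle is this last step: producing a concrete limit $\lambda_0\in\overline{U}$ whose function $f_{\lambda_0}$ can be analyzed using log resolution data of $\fa$ and for which the minimal exponent admits a direct lower bound by $\lct(\fa)$. Since the classical inequality $\widetilde{\alpha}\geq\lct$ is too weak in the regime $\lct(\fa)>1$, one must invoke the refined minimal-exponent estimates from \cite{MP} to bridge the gap between the birational invariant $\lct(\fa)$ and the Hodge-theoretic invariant $\widetilde{\alpha}_f$.
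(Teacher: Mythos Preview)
Your treatment of the case $\lct(\fa)\leq 1$ is fine and matches the paper's. The gap is in the case $\lct(\fa)>1$, and it is exactly the one you flag yourself: you never produce the ``tractable'' $\lambda_0$ nor explain which estimate from \cite{MP} would give $\widetilde{\alpha}_{f_{\lambda_0}}\geq\lct(\fa)$. Without that, the Varchenko/$\mu$-constant machinery is a detour that leads nowhere---indeed, once you know lower semicontinuity of $\widetilde{\alpha}$ in families (which is already in \cite{MP}), a single $\lambda_0$ with $\widetilde{\alpha}_{f_{\lambda_0}}\geq\lct(\fa)$ would finish the argument by itself, so the $\mu$-constant step is redundant. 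But finding such a $\lambda_0$ is precisely the content of the proposition, so your plan as written is circular.

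The paper's argument bypasses all of this. The split is by $\codim_X V(\fa)$ rather than by $\lct(\fa)$. When $\codim_X V(\fa)\geq 2$, choose a log resolution $\pi\colon Y\to X$ of $(X,\fa)$ that is an isomorphism over $X\smallsetminus V(\fa)$; then $\fa\cdot\cO_Y=\cO_Y(-E)$ with $E=\sum a_iE_i$ simple normal crossing and \emph{every} $E_i$ exceptional. For general $\lambda$, Bertini on the linear system $\pi^*|f_1,\ldots,f_r|$ gives $\pi^*(\divisor f)=F+E$ with $F$ smooth and transverse to $E$. Thus $\pi$ is already a log resolution of $(X,\divisor f)$ with smooth strict transform, and \cite[Corollary~D]{MP} gives directly
\[
\widetilde{\alpha}_f\;\geq\;\min_i\frac{k_i+1}{a_i}\;=\;\lct(\fa),
\]
where $K_{Y/X}=\sum k_iE_i$. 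The point you were missing is that \cite[Corollary~D]{MP} applies to the \emph{general} $f_\lambda$ itself, with no specialization required: the log resolution of $\fa$ is automatically the right log resolution of $f_\lambda$, and the exceptionality of all $E_i$ (guaranteed by $\codim\geq 2$) is exactly what lets the minimal-exponent bound exceed $1$.
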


\begin{proof}
If the zero-locus $Z$ of ${\mathfrak a}$ has codimension 1 in $X$, then ${\rm lct}({\mathfrak a})\leq 1$ and in this case we have
$$\widetilde{\alpha}_f\geq {\rm lct}(f)={\rm lct}({\mathfrak a}).$$
The equality follows from the fact that for every $t\in (0,1]$, we have the equality of multiplier ideals
$${\mathcal J}(f^t)={\mathcal J}({\mathfrak a}^t)$$
(see \cite[Proposition~9.2.26]{Lazarsfeld}). We thus may and will assume that ${\rm codim}_X(Z)\geq 2$.

The argument then proceeds as in \emph{loc. cit}. Let $\pi\colon Y\to X$ be a log resolution of $(X, {\mathfrak a})$ that is an isomorphism
over $X\smallsetminus Z$. By construction, if we put ${\mathfrak a}\cdot\cO_Y=\cO_Y(-E)$, then $E$ is a simple normal crossing divisor
such that if we write $E=\sum_{i=1}^Na_iE_i$, then every $E_i$ is a $\pi$-exceptional divisor. Since $\lambda_1,\ldots,\lambda_r$ are general,
it follows that if $D$ is the divisor defined by $f$, then $\pi^*(D)=F+E$, where $F$ is a smooth divisor, with no common components with $E$, and
having simple normal crossings with $E$. In particular, $D$ is a reduced divisor and $\pi$ is a log resolution of $(X,D)$ such that the strict transform of $D$ is smooth. We thus deduce
using  \cite[Corollary~D]{MP}
that if $K_{Y/X}=\sum_{i=1}^Nk_iE_i$, then
$$\widetilde{\alpha}_f\geq\min_{i=1}^N\frac{k_i+1}{a_i}=\lct({\mathfrak a}).$$
\end{proof}

In what follows we will also make use of a local version of the minimal exponent. Recall that if $f\in\cO_X(X)$ is nonzero and $P\in X$ is such that
$f(P)=0$, then $\max\{\widetilde{\alpha}_{f\vert_U}\mid U\ni P\}$, where $U$ varies over the open neighborhoods of $P$,
 is achieved for all small enough $U$. This maximum is denoted $\widetilde{\alpha}_{f,P}$ and we have $\widetilde{\alpha}_f=\min
\{ \widetilde{\alpha}_{f,P}\mid P\in X\}$.

\begin{rmk}\label{rmk_gen_combination}
With the same notation as in Proposition~\ref{gen_combination}, for every $P\in X$, if $\lambda_1,\ldots,\lambda_r\in \C$ are general, then we
have
$$\widetilde{\alpha}_{f,P}\geq\lct_P({\mathfrak a}).$$
Indeed, if we choose an open neighborhood $U$ of $P$ such that $\lct_P({\mathfrak a})=\lct({\mathfrak a}\vert_U)$, then by applying the proposition
for ${\mathfrak a}\vert_U$, we obtain
$$\widetilde{\alpha}_{f,P}\geq \widetilde{\alpha}(f\vert_U)\geq \lct({\mathfrak a}\vert_U)=\lct_P({\mathfrak a}).$$
\end{rmk}

Given $f\in\cO_X(X)$, we denote by $J_f$ the \emph{Jacobian ideal} of $f$. Recall that if $U$ is an open subset of $X$
such that $x_1,\ldots,x_n$ are algebraic local coordinates on $U$ (that is, $dx_1,\ldots,dx_n$ give a trivialization of
$\Omega_U$), then $J_f$ is generated on $U$ by $f,\frac{\partial f}{\partial x_1},\ldots,
\frac{\partial f}{\partial x_n}$ (the fact that this only depends on the ideal generated by $f$, but not on the particular generator of the ideal or on the system of coordinates is well-known and straightforward to check). Note that $P\in X$ is an isolated point in the zero-locus of $J_f$ if and only if
$f$ has an isolated singular point at $P$. In this case, we also consider the \emph{Milnor number} $\mu_P(f)$: if $x_1,\ldots,x_n$ is an algebraic
system of coordinates centered at $P$, then $\mu_P(f)=\ell\big(\cO_{X,P}/(\partial f/\partial x_1,\ldots,\partial f/\partial x_n)\big)$. Note that
while the ideal $(\partial f/\partial x_1,\ldots,\partial f/\partial x_n)$ might depend on the system of coordinates, its colength does not.

If $f$ has an isolated singularity at $P\in X$, then Steenbrink \cite{Steenbrink} and Varchenko \cite{Varchenko2} defined a mixed Hodge structure
on the vanishing cohomology of $f$ at $P$, and using this, together with the monodromy action, one defines the \emph{spectrum} ${\rm Sp}_P(f)$,
which is a set of rational numbers, with multiplicities, in the interval $(-1,n-1)$. The sum of the numbers in the spectrum, counted with multiplicities,
is equal to the Milnor number $\mu_P(f)$. It was shown by Malgrange \cite{Malgrange} that
$$\widetilde{\alpha}_{f,P}=1+\min\{\beta\mid\beta\in {\rm Sp}_P(f)\}.$$

We will make use of the following result of Varchenko \cite{Varchenko} about the behavior of the spectrum in families. Suppose that we have a smooth morphism
$\pi\colon Y\to T$, a section $s\colon T\to Y$ of $\pi$, and $g\in\cO_Y(Y)$ such that for every $t\in T$, the restriction $g_t$ of $g$ to the fiber
$Y_t=\pi^{-1}(t)$ is nonzero and $g\big(s(t)\big)=0$. If
$T$ is connected, and $g_t$ has an isolated singularity at $s(t)$, with $\mu_{s(t)}(g_t)$ independent of $t\in T$, then
the spectrum of $g_t$ at $s(t)$ is independent of $t$; in particular,
$\widetilde{\alpha}_{g_t,s(t)}$ is independent of $t$.

We can now prove the inequality between $\widetilde{\alpha}_f$ and $\lct(f,J_f^2)$:

\begin{proof}[Proof of Theorem~\ref{thmB_intro}]
Since
$$\widetilde{\alpha}_f=\min_{P\in Z}\widetilde{\alpha}_{f,P}\quad\text{and}\quad \lct\big((f)+J_f^2\big)=\min_{P\in Z}\lct_P\big((f)+J_f^2\big),$$
where $Z$ is the hypersurface defined by $f$,
it follows that in order to prove the inequality in the theorem, it is enough to show that for every $P\in Z$, we have
\begin{equation}\label{eq_thm_inequality}
\widetilde{\alpha}_{f,P}\geq\lct_P\big((f)+J_f^2\big).
\end{equation}
We may and will assume that $f$ has a singular point at $P$, since otherwise, by convention, both sides of (\ref{eq_thm_inequality})
are infinite. After replacing $X$ by a suitable affine open neighborhood
of $P$, we may assume that $X$ is affine and we have an algebraic system of coordinates $x_1,\ldots,x_n$ centered at $P$.

We first show that it is enough to treat the case when $f$ has an isolated singular point at $P$.  Given any $N\geq 2$, let
$f_N=f+\sum_{i=1}^n a_{N,i}x_i^N$, with $a_{N,1},\ldots,a_{N,n}\in \C$ general. Since the zero locus of the linear system generated by
$f,x_1^N,\ldots,x_n^N$ consists just of $P$, it follows from Kleiman's version of Bertini's theorem that $f_N$ has an isolated singular point at $P$.
On one hand, we have by \cite[Proposition~6.7]{MP}
$$|\widetilde{\alpha}_{f_N,P}-\widetilde{\alpha}_{f,P}|\leq \frac{n}{N}.$$
On the other hand, the ideals $(f)+J_f^2$ and $(f_N)+J_{f_N}^2$ are equal mod $(x_1,\ldots,x_n)^{N-1}$, hence
$$|\lct_P(f,J_f^2)-\lct_P(f_N,J_{f_N}^2)|\leq\frac{n}{N-1}$$
(see, for example, \cite[Property~1.21]{Mustata}).
We thus deduce that if we know the inequality (\ref{eq_thm_inequality}) for each $f_N$, by letting $N$ go to infinity,
we obtain the same inequality for $f$. From now on, we assume that $f$ has an isolated singular point at $P$.

For every $g\in\cO_X(X)$, we put
$$J'_g=\left(\frac{\partial g}{\partial x_1},\ldots,\frac{\partial g}{\partial x_n}\right).$$
Given $\lambda=(\lambda_{i,j})_{1\leq i,j\leq n}\in\C^{n^2}$, we consider
$$g_{\lambda}:=f+\sum_{i,j=1}^n\lambda_{i,j}\frac{\partial f}{\partial x_i}\frac{\partial f}{\partial x_j}.$$
Since $P$ is a singular point of $f$, we have $g_{\lambda}(P)=0$.
Note that $g_0=f$ and for every $\lambda\in\C^{n^2}$ and every $i$, we have
$$\frac{\partial g_{\lambda}}{\partial x_i}-\frac{\partial f}{\partial x_i}\in J'_f\quad\text{for}\quad 1\leq i\leq n.$$
In particular, we have
$$J'_{g_{\lambda}}\subseteq J'_f\quad\text{for all}\quad\lambda\in\C^{n^2},$$
and thus
\begin{equation}\label{eq2_thm_inequality}
\ell\big(\cO_{X,P}/J'_f)\leq \ell(\cO_{X,P}/J'_{g_{\lambda}})\quad\text{for all}\quad\lambda\in\C^{n^2}.
\end{equation}
By the semicontinuity theorem for fiber dimensions, the set
$$U:=\{\lambda\in\C^{n^2}\mid \ell(\cO_{X,P}/J'_{g_{\lambda}})<\infty\}$$
is open in $\C^{n^2}$. Moreover, by the upper semicontinuity of the Milnor number,
for every nonnegative integer $m$, the set
$$U_m:=\{\lambda\in U\mid \ell(\cO_{X,P}/J'_{g_{\lambda}})\leq m\}$$
is open in $U$.
Let $m_0=\ell(\cO_{X,P}/J'_f)=\mu_P(f)$, so that $0\in U_{m_0}$.
It follows from (\ref{eq2_thm_inequality}) that $\mu_P(g_{\lambda})=\mu_P(f)$ for every $\lambda\in U_{m_0}$, hence by Varchenko's result,
we have
\begin{equation}\label{eq3_thm_inequality}
\widetilde{\alpha}_{g_{\lambda},P}=\widetilde{\alpha}_{f,P}\quad\text{for all}\quad\lambda\in U_{m_0}.
\end{equation}
On the other hand, it follows from Remark~\ref{rmk_gen_combination} that for $\lambda\in U_{m_0}$ general,
we have
$$\widetilde{\alpha}_{g_{\lambda},P}\geq \lct_P\big((f)+{J'}_f^2\big)=\lct_P\big((f)+J_f^2\big).$$
This completes the proof of the theorem.
\end{proof}

We now deduce the fact that $\lct(f,J_f^2)$ detects rational singularities.

\begin{proof}[Proof of Theorem~\ref{thmA_intro}]
It follows from
\cite[Theorem~0.4]{Saito-B} that the hypersurface defined by $f$ does not have rational singularities if and only if
$\widetilde{\alpha}_f\leq 1$, in which case we have $\widetilde{\alpha}_f=\lct(f)$. We deduce from
Theorem~\ref{thmB_intro} that in this case $\lct(f)\geq \lct\big((f)+J_f^2\big)$, while the reverse inequality simply follows from the inclusion
$(f)\subseteq (f)+J_f^2$. This proves i).

The assertion in ii) is straightforward: suppose that $H$ has rational singularities and
let $\pi\colon Y\to X$ be a log resolution of $(X,H)$ that is at the same time a log resolution
of the ideal $(f)+J_f^2$. Note first that if $E$ is a prime divisor on $Y$ such that ${\rm ord}_E(f,J_f^2)>0$, then ${\rm ord}_E(J_f)>0$,
and thus $E$ is a $\pi$-exceptional divisor (since $H$ has rational singularities, it is in particular reduced, hence ${\rm ord}_D(J_f)=0$
for every irreducible component $D$ of $H$). Furthermore,
$H$ has rational singularities if and only if it has canonical singularities by a result of Elkik (see \cite[Theorem~11.1]{Kollar}); moreover,
this is the case if and only if the pair
$(X,H)$ has canonical singularities by a result of Stevens (see \cite[Theorem~7.9]{Kollar}). Since $(X,H)$ has canonical singularities and
$E$ is exceptional, we have
$$A_X({\rm ord}_E)\geq {\rm ord}_E(f)+1\geq {\rm ord}_E(f,J_f^2)+1.$$
This holds for all prime divisors $E$ on $Y$ for which ${\rm ord}_E(f,J_f^2)>0$, hence we conclude that
$\lct(f,J_f^2)>1$.
\end{proof}

\section{An approach to $\lct(f,J_f^2)$ via arcs}\label{second_proof}

In this section we use the approach to valuations via arcs to prove Theorem~\ref{thm3_intro}, which we apply to
deduce Corollary~\ref{corC_intro} and give another proof of Theorem~\ref{thmA_intro}.
We keep the assumption that $X$ is a smooth (irreducible)  complex algebraic variety, of dimension $n$.

We first review briefly the definition of jet schemes and the arc scheme. For details, see for example \cite{EM}.
For every $m\geq 0$, the $m^{\rm th}$ \emph{jet scheme} $X_m$ of $X$ is a scheme over $X$ with the property that for every
$\C$-algebra $A$, we have a functorial bijection
$${\rm Hom}({\rm Spec} A,X_m)\simeq {\rm Hom}\big({\rm Spec} A[t]/(t^{m+1}),X\big).$$
In particular, the points of $X_m$ are in canonical bijection with the \emph{$m$-jets} on $X$, that is,
maps ${\rm Spec}\,\C[t]/(t^{m+1})\to X$.
Given such an $m$-jet $\gamma\colon {\rm Spec}\,\C[t]/(t^{m+1})\to X$,
we denote by $\gamma(0)$ the image of the closed point and by $\gamma^*$ the induced ring homomorphism
$\cO_{X,\gamma(0)}\to\C[t]/(t^{m+1})$.
Truncation induces morphisms $X_m\to X_p$ whenever $p<m$ and these satisfy the obvious compatibilities.
Note that we have a canonical isomorphism  $X_0\simeq X$ and we denote by $\pi_m$ the truncation morphism
$X_m\to X$.
With this notation, we have $\pi_m(\gamma)=\gamma(0)$.

All truncation morphisms are affine, hence we may consider the projective limit $X_{\infty}$ of the system $(X_m)_{m\geq 0}$.
This is the \emph{space of arcs} (or \emph{arc scheme}) of $X$. Its $\C$-valued points are in canonical bijection with maps $\Spec\,\C\llbracket t\rrbracket\to X$.
In what follows, we identify $X_{\infty}$ with the corresponding set of $\C$-valued points.
For an arc $\gamma$, we use the notation $\gamma(0)$ and $\gamma^*$ as above. Note that the space of arcs $X_{\infty}$ comes
endowed with truncation maps $\psi_m\colon X_{\infty}\to X_m$ compatible with the truncation morphisms between jet schemes.

Since $X$ is smooth and $n$-dimensional, every morphism $X_m\to X$ is locally trivial in the Zariski topology, with fiber $\A^{mn}$.
In fact, if $x_1,\ldots,x_n$ are algebraic coordinates on an open subset $U$ of $X$,
then we have an isomorphism
$$\pi_m^{-1}(U)\simeq U\times(\A^m)^n,$$
which maps $\gamma$ to $\big(\gamma(0),\gamma^*(x_1),\ldots,\gamma^*(x_n)\big)$
(note that each $\gamma^*(x_i)$ lies in $t\C[t]/t^{m+1}\C[t]\simeq\C^m$).
In particular, every $X_m$ is a smooth, irreducible variety, of dimension $(m+1)n$.
Moreover, using the above isomorphisms we see that each truncation morphism $X_m\to X_p$,
with $p<m$, is locally trivial, with fiber $\A^{(m-p)n}$.

We next turn to the connection between divisorial valuations and certain subsets in the space of arcs.
A \emph{cylinder} in $X_{\infty}$ is a subset of the form $C=\psi_m^{-1}(S)$, where $S$ is a constructible subset of $X_m$.
In this case $C$ is \emph{irreducible}, \emph{closed}, \emph{open}, or \emph{locally closed}
(with respect to the Zariski topology on $X_{\infty}$)
if and only if $S$ has this property. In particular, we have irreducible decomposition
for locally closed cylinders.
The \emph{codimension}
of a cylinder $C=\psi_m^{-1}(S)$ is defined as
$${\rm codim}(C)={\rm codim}_{X_m}(S).$$
It is clear that this is independent of the way we write $C$ as the inverse image of a constructible set.

An important example of cylinders is provided by \emph{contact loci}. Given a coherent ideal ${\mathfrak a}$ in $\cO_X$,
we put ${\rm ord}_{\gamma}({\mathfrak a})\in\Z_{\geq 0}\cup\{\infty\}$ to be that $m$ such that $\gamma^*({\mathfrak a})\C\llbracket t\rrbracket
=(t^m)$, with the convention that
${\rm ord}_{\gamma}({\mathfrak a})=\infty$ if $\gamma^*({\mathfrak a})=0$. The set ${\rm Cont}^{\geq m}({\mathfrak a})$ of $X_{\infty}$
consisting of all arcs $\gamma$ with ${\rm ord}_{\gamma}({\mathfrak a})\geq m$ is a closed cylinder in $X_{\infty}$. We similarly define the
locally closed cylinder ${\rm Cont}^m({\mathfrak a})$.

It turns out that one can use cylinders in $X_{\infty}$ in order to describe divisorial valuations on $X$. We simply state the results
and refer for details and proofs to \cite{ELM}. We assume, for simplicity, that $X$ is affine, though everything extends to the general case
in a straightforward way. For every closed, irreducible cylinder $C\subseteq X_{\infty}$, that does not dominate
$X$ and every $h\in\cO_X(X)$ nonzero, we put
$${\rm ord}_C(h):=\min\{{\rm ord}_{\gamma}(h)\mid\gamma\in C\}\in\Z_{\geq 0}.$$
This extends to a valuation of the function field of $X$; in fact, this is a divisorial valuation, whose center is the
closure of $\psi_0(C)$.

Conversely, if $v=q\cdot {\rm ord}_E$ for some smooth prime divisor $E$ on the smooth variety $Y$, with a birational
morphism $g\colon Y\to X$,
then we have an induced morphism $g_{\infty}\colon Y_{\infty}\to X_{\infty}$ and if $C(v)$ is the closure of $g_{\infty}\big({\rm Cont}^{\geq q}(\cO_Y(-E))\big)$,
then $C(v)$ is an irreducible closed cylinder in $X_{\infty}$ and ${\rm ord}_{C(v)}=v$. Moreover, a key fact is that
$${\rm codim}\big(C(v)\big)=A_X(v).$$
In addition, for every closed, irreducible cylinder $C\subseteq X_{\infty}$ that does not dominate $X$, if $v={\rm ord}_C$, then
$C\subseteq C(v)$. In particular, we have ${\rm codim}(C)\geq A_X(v)$.

After this overview, we can prove our general result about valuations.

\begin{proof}[Proof of Theorem~\ref{thm3_intro}]
Let $C=C(v)$, so that ${\rm ord}_C=v$ and ${\rm codim}(C)=A_X(v)$.
We put $Z=c_X(v)$, so that $Z$ is the closure of $\psi_0(C)$.
If $m=v(f)$ and $e=v(J_f)$,
we have by hypothesis $0<e<\frac{1}{2}m$. Since $m={\ord}_C(f)$ and $e={\rm ord}_C(J_f)$, we have $C\subseteq {\rm Cont}^{\geq m}(f)\cap
{\rm Cont}^{\geq e}(J_f)$.
Let $C_0:=C\cap {\rm Cont}^e(J_f)$, which is a nonempty subcylinder of $C$, open in $C$.
Since $C$ is irreducible, we have $C=\overline{C_0}$.
We also consider the locally closed cylinder
$$C':={\rm Cont}^{\geq (m-1)}(f)\cap{\rm Cont}^e(J_f)\cap \psi_0^{-1}(Z).$$
It is clear that $C_0$ is a closed subset of $C'$. We make the following

\noindent {\bf Claim}. $C_0$ is \emph{not} an irreducible component of $C'$.

Assuming the claim, let $W$ be an irreducible component of $C'$ that contains $C_0$ and
$\overline{W}$ its closure in $X_{\infty}$. Note that $\overline{W}$ is an irreducible, closed cylinder
in $X_{\infty}$ such that $\psi_0(\overline{W})\subseteq Z$. Since we also have
$$Z\subseteq\overline{\psi_0(C_0)}\subseteq\overline{\psi_0(\overline{W})},$$
we conclude that $\overline{\psi_0(\overline{W})}=Z$. Therefore $w:={\rm ord}_{\overline{W}}$
is a divisorial valuation on $X$, with center $Z$.

Since $W\subseteq {\rm Cont}^{\geq m-1}(f)$, it follows that $w(f)\geq m-1$. Furthermore, since
$C_0\subseteq W$, we have $C=\overline{C_0}\subseteq\overline{W}$, hence we clearly have
$$w(g)={\rm ord}_{\overline{W}}(g)\geq {\rm ord}_C(g)=v(g)\quad\text{for all}\quad g\in\cO_X(X).$$
Finally, since $C_0$ is a proper closed subset of $W$, we have
$$A_X(w)\leq {\rm codim}(\overline{W})\leq {\rm codim}(\overline{C_0})-1=A_X(v)-1,$$
hence $w$ satisfies all the required conditions. Therefore it is enough to prove the claim.

Note that our assumptions imply that $m-e-2\geq e-1\geq 0$. In order to simplify the notation, we put $\psi=\psi_{m-e-2}\colon
X_{\infty}\to X_{m-e-2}$.
The key point is to describe, for every $\overline{\gamma}\in\psi(C')$, the cylinders
$\psi^{-1}(\overline{\gamma})\cap C_0\subseteq \psi^{-1}(\overline{\gamma})\cap C'$.
This is a local computation, based on Taylor's formula, that goes back to the proof of \cite[Lemma~3.4]{DenefLoeser}.
We choose an algebraic system of coordinates $x_1,\ldots,x_n$ in  a neighborhood of $P=\pi_{m-e-2}(\overline{\gamma})$,
centered at $P$. This gives an isomorphism $\widehat{\cO_{X,P}}\simeq\C\llbracket y_1,\ldots,y_n\rrbracket$ that maps each $x_i$ to $y_i$, and
let $\varphi\in \C\llbracket y_1,\ldots,y_n\rrbracket$ be the formal power series corresponding to $f$.
As we have seen, the system of coordinates also allows us to identify an arc on $X$ lying over $P$ with an element of $\big(t\C\llbracket t\rrbracket\big)^n$.

Let $\gamma\in C'$ be an arc with $\psi(\gamma)=\overline{\gamma}$, which corresponds to $u=(u_1,\ldots,u_n)\in \big(t\C\llbracket t\rrbracket\big)^n$,
so that $\gamma^*(f)=\varphi(u_1,\ldots,u_n)$. Any other arc $\delta\in \psi^{-1}(\overline{\gamma})$ corresponds to
$u+v$, for some $v=(v_1,\ldots,v_n)\in \big(t^{m-e-1}\C\llbracket t\rrbracket\big)^n$. It follows from the Taylor expansion of $\varphi$ that we have
\begin{equation}\label{eq_thm3_intro}
\delta^*(f)=\varphi(u+v)=\gamma^*(f)+\sum_{i=1}^n\gamma^*\left(\frac{\partial f}{\partial x_i}\right)v_i+\sum_{i,j=1}^n\gamma^*\left(\frac{\partial^2 f}{\partial x_i
\partial x_j}\right)v_iv_j+\text{higher order terms}.
\end{equation}
By assumption, we have ${\rm ord}_t\gamma^*(f)\geq m-1$ and
$$\min_{i=1}^n{\rm ord}_t\gamma^*\left(\frac{\partial f}{\partial x_i}\right)=e$$
(note that $m-1>e$). Since ${\rm ord}_tv_i\geq m-e-1$, an immediate computation using (\ref{eq_thm3_intro})
shows that for every such $\delta$, we have ${\rm ord}_t\delta^*(f)\geq m-1$. Since $m-e-1\geq e$, we also see that
${\rm ord}_t\delta^*(J_f)\geq e$, hence $\psi^{-1}(\overline{\gamma})\cap C'$ is the open subset of $\psi^{-1}(\overline{\gamma})$ defined by
having contact order with $J_f$ precisely $e$; in particular, this is an irreducible cylinder.

For every series $g\in\C\llbracket t\rrbracket$, let us write $g_0$ for the constant term of $g$.
We also write $v_{i,0}$ for the constant term of $v_i/t^{m-e-1}$.
It follows from (\ref{eq_thm3_intro})
that the coeficient of $t^{m-1}$ in $\delta^*(f)$ is equal to
$$\big(\gamma^*(f)/t^{m-1}\big)_0+\sum_{i=1}^n\big(\gamma^*(\partial f/\partial x_i)/t^e\big)_0v_{i,0}+
\sum_{i,j=1}^n\big(\gamma^*(\partial^2 f/\partial x_i\partial x_j)\big)_0v_{i,0}v_{j,0},$$
and the last term only appears if $m=2e+1$. In any case, since some $\big(\gamma^*(\partial f/\partial x_i)/t^e\big)_0$ is nonzero,
the vanishing of this coefficient defines a hypersurface in the affine space $\A^n$ parametrizing $(v_{1,0},\ldots,v_{n,0})$.
Since $C_0\subseteq {\rm Cont}^{\geq m}(f)$, it follows that $\psi^{-1}(\overline{\gamma})\cap C_0$ is different from $\psi^{-1}(\overline{\gamma})\cap C'$.
The fact that $C_0$ is not an irreducible component of $C'$ follows by taking the image in some $X_q$, with $q\gg 0$
(so that $C_0$ is the inverse image of a locally closed subset of $X_q)$, and applying the following easy lemma.
\end{proof}

\begin{lem}
Let $g\colon U\to V$ be a morphism of algebraic varieties and $W$  a closed subset of $U$
such that for every $y\in g(U)$, the following two conditions hold:
\begin{enumerate}
\item[i)] The fiber $g^{-1}(y)$ is irreducible, and
\item[ii)] The intersection $g^{-1}(y)\cap W$ is different from $g^{-1}(y)$.
\end{enumerate}
Then $W$ is not an irreducible component of $U$.
\end{lem}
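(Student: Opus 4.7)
The plan is to argue by contradiction, leveraging the observation that an irreducible component of an algebraic variety contains a nonempty subset that is open in the ambient space. Since $U$ is a variety (in particular Noetherian), only finitely many irreducible components appear and this open subset is easy to exhibit.

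Assume for contradiction that $W$ is an irreducible component of $U$. Let $U'$ denote the union of the other irreducible components of $U$; since $W$ is itself a component, $W \not\subseteq U'$, and therefore $V_0 := U \setminus U'$ is a nonempty subset of $U$ that is open in $U$ and contained in $W$. Now I would pick any $u \in V_0$ and set $y := g(u) \in g(U)$. By hypothesis (i), the fiber $g^{-1}(y)$ is irreducible, and by (ii) it is not contained in $W$.

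The key step is to show that in fact $g^{-1}(y) \subseteq W$, yielding a contradiction. The intersection $g^{-1}(y) \cap V_0$ is open in $g^{-1}(y)$ (since $V_0$ is open in $U$) and nonempty (it contains $u$); by irreducibility of $g^{-1}(y)$, it is therefore dense in $g^{-1}(y)$. Since $V_0 \subseteq W$ and $W$ is closed in $U$, the set $g^{-1}(y) \cap W$ is a closed subset of $g^{-1}(y)$ that contains the dense subset $g^{-1}(y) \cap V_0$. Hence $g^{-1}(y) \cap W = g^{-1}(y)$, contradicting (ii).

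I do not anticipate any real obstacle: the proof is a short topological density argument, combining the irreducibility of the fiber from (i) with the standard fact that $W$, being a component, contains a nonempty subset open in $U$. The one point worth flagging is that the hypothesis that $W$ is closed in $U$ is essential for the final density step (to conclude that $g^{-1}(y)\cap W$ is closed in $g^{-1}(y)$, hence contains the closure of $g^{-1}(y)\cap V_0$); this is included in the statement.
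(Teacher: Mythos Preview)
Your proof is correct and follows essentially the same approach as the paper: both argue by contradiction, pick a point of $W$ lying outside the other irreducible components, and use irreducibility of the fiber through that point to force $g^{-1}(y)\subseteq W$, contradicting (ii). The only cosmetic difference is that the paper phrases the key step as ``an irreducible set contained in a finite union of closed sets lies in one of them,'' whereas you phrase it as a density argument; these are equivalent.
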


\begin{proof}
Arguing by contradiction, suppose that the irreducible components of $U$ are
$U_1=W,U_2,\ldots,U_r$. If $x\in W\smallsetminus\bigcup_{i\geq 2}U_i$ and $y=g(x)$, then
$g^{-1}(y)\not\subseteq U_i$ for any $i\geq 2$. On the other hand,
$g^{-1}(y)$ is irreducible and contained in $\bigcup_{i\geq 1}\big(U_i\cap g^{-1}(y)\big)$,
hence $g^{-1}(y)\subseteq U_1$. This contradicts condition ii).
\end{proof}

We can now deduce the assertion about multiplier ideals:

\begin{proof}[Proof of Corollary~\ref{corC_intro}]
Since $(f)\subseteq (f)+J_f^2$, we clearly have the inclusion
$$\cJ(f^{\lambda})\subseteq\cJ\big((f,J_f^2)^{\lambda}\big)$$
for all $\lambda>0$. We now suppose that $\lambda<1$ and prove the reverse inclusion.

We may and will assume that $X$ is affine. Arguing by contradiction, suppose that we have $g\in \cJ\big((f,J_f^2)^{\lambda}\big)$
such that $g\not\in\cJ(f^{\lambda})$. By definition of multiplier ideals, the latter condition
implies  that there is a valuation $v={\rm ord}_E$,
where $E$ is a prime divisor on a log resolution of $(X,f)$ such that
\begin{equation}\label{eq_corC}
v(g)+A_X(v)\leq \lambda\cdot v(f).
\end{equation}
We choose such $E$ with the property that $A_X(v)$ is minimal.

If $v(J_f)\geq \frac{1}{2}v(f)$, then $v(f)=v(f,J_f^2)$, and
(\ref{eq_corC}) contradicts the fact that $g\in  \cJ\big((f,J_f^2)^{\lambda}\big)$.
Hence we may and will assume that $v(J_f)<\frac{1}{2}v(f)$. If $v(J_f)=0$, then there is an open subset $U$ of $X$
that intersects the center of $v$ on $X$ and such that $f\vert_U$ defines a smooth hypersurface. In this case, since $\lambda<1$,
we have $\cJ(f\vert_U^{\lambda})=\cO_U$. On the other hand, since $U$ intersects the center of $v$, it follows from (\ref{eq_corC})
that $g\vert_U\not\in \cJ(f\vert_U^{\lambda})$, a contradiction.

We thus may and will assume that
$$0<v(J_f)<\frac{1}{2}v(f).$$
In this case, it follows from Theorem~\ref{thm3_intro} that there is a divisorial valuation $w$ on $X$ that satisfies
properties i), ii), and iii) in the theorem. We thus have
$$A_X(w)\leq A_X(v)-1\leq \lambda\cdot v(f)-1-v(g)\leq \lambda\cdot \big(w(f)+1\big)-1-w(g),$$
where the first inequality follows from condition iii) and the third inequality follows from conditions i) and ii).
Since
$$\lambda\cdot \big(w(f)+1\big)-1-w(g)=\lambda\cdot w(f)-w(g)+\lambda-1\leq \lambda\cdot w(f)-w(g),$$
if we write $w=q\cdot {\rm ord}_F$, for some prime divisor $F$ over $X$, after dividing the above inequalities by $q$,
we obtain
$$A_X({\rm ord}_F)\leq \lambda\cdot {\rm ord}_F(f)-{\rm ord}_F(g)$$
and
$$A_X({\rm ord}_F)\leq A_X(w)\leq A_X(v)-1,$$
contradicting the minimality in the choice of $E$. The contradiction we obtained shows that we have in fact
$$\cJ\big((f,J_f^2)^{\lambda}\big)\subseteq \cJ(f^{\lambda}),$$
completing the proof of the first assertion in the corollary.

The proof of the second assertion is similar. We may and will assume that $X$ is affine. Recall that the
adjoint ideal ${\rm adj}(f)$ consists of all $g\in\cO_X(X)$ with the property that for every exceptional divisor $E$ over $X$,
we have
$${\rm ord}_E(g)>{\rm ord}_E(f)-A_X({\rm ord}_E)$$
(see \cite[Chapter~9.3.48]{Lazarsfeld}).
The inclusion
$$
{\rm adj}(f)\subseteq\cJ(f,J_f^2),
$$
is easy: if $g\in {\rm adj}(f)$, then for every divisorial valuation ${\rm ord}_E$ on $X$, we have
\begin{equation}\label{eq0_pf_cor}
{\rm ord}_E(g)>{\rm ord}_E(f,J_f^2)-A_X({\rm ord}_E).
\end{equation}
If $E$ is exceptional, this follows from the definition of the adjoint ideal and the fact that ${\rm ord}_E(f)\geq {\rm ord}_E(f,J_f^2)$.
On the other hand, if $E$ is a divisor on $X$, then $A_X({\rm ord}_E)=1$ and ${\rm ord}_E(f,J_f^2)=0$ (the latter equality follows from the fact that
if $f$ vanishes on $E$, since $f$ defines a reduced hypersurface, we have ${\rm ord}_E(J_f)=0$). Since (\ref{eq0_pf_cor})
holds for every $E$, we conclude that $g\in\J(f,J_f^2)$.

We now turn to the interesting inclusion
$$\cJ(f,J_f^2)\subseteq {\rm adj}(f).$$
Suppose that $g\in \cJ(f,J_f^2)$, but $g\not\in {\rm adj}(f)$. The latter condition implies that there is
an exceptional divisor $E$ over $X$ such that
\begin{equation}\label{eq_pf_cor}
{\rm ord}_E(g)\leq {\rm ord}_E(f)-A_X({\rm ord}_E).
\end{equation}
We choose such $E$ with $A_X({\rm ord}_E)$ minimal
and argue as in the proof of the first part.

If ${\rm ord}_E(J_f)\geq\frac{1}{2}{\rm ord}_E(f)$, then ${\rm ord}_E(f)={\rm ord}_E(f,J_f^2)$, and
(\ref{eq_pf_cor}) contradicts the fact that $g\in\cJ(f,J_f^2)$.
Hence we may and will assume that ${\rm ord}_E(J_f)<\frac{1}{2}{\rm ord}_E(f)$.
If ${\rm ord}_E(J_f)=0$, then there is an open subset $U$ of $X$ that intersects the center of ${\rm ord}_E$ on $X$
and such that $f\vert_U$ defines a smooth hypersurface. In particular,
we have ${\rm adj}(f)\vert_U=\cO_U$ by \cite[Proposition~9.3.48]{Lazarsfeld}, contradicting (\ref{eq_pf_cor}).

We thus may and will assume that
$$0<{\rm ord}_E(J_f)<{\rm ord}_E(f).$$
We then apply Theorem~\ref{thm3_intro} for $v={\rm ord}_E$
 to find a divisorial valuation $w=q\cdot {\rm ord}_F$ on $X$ that satisfies
properties i)-iv) in the theorem. We obtain
$$w(g)\leq {\rm ord}_E(g)\leq {\rm ord}_E(f)-A_X({\rm ord}_E)\leq w(f)-A_X(w).$$
Dividing by $q$, we obtain
$${\rm ord}_F(g)\leq {\rm ord}_F(f)-A_X({\rm ord}_F).$$
Since
$A_X({\rm ord}_F)\leq A_X(w)\leq A_X(v)-1$ and $F$ is an exceptional divisor
(we use here the fact that $c_X({\rm ord}_F)=c_X({\rm ord}_E)$), this contradicts the minimality in our choice of $E$.
This completes the proof of the corollary.
\end{proof}

Corollary~\ref{corC_intro} easily implies the assertions in Theorem~\ref{thmA_intro}.

\begin{proof}[Second proof of Theorem~\ref{thmA_intro}]
Recall that for a proper nonzero ideal ${\mathfrak a}$, we have
$$\lct({\mathfrak a})=\min\{\lambda>0\mid\cJ({\mathfrak a}^{\lambda})\neq\cO_X\}.$$
Since $\lct(f)\leq 1$, the first assertion in Corollary~\ref{corC_intro} implies that in general, we have
$$\lct(f)=\min\{\lct(f,J_f^2),1\}.$$
Note that if $\lct(f)=1$, then automatically $H$ is reduced. We thus deduce from the second
assertion in Corollary~\ref{corC_intro} that ${\rm lct}(f,J_f^2)>1$ if and only if $H$ is reduced
and ${\rm adj}(f)=\cO_X$. By \cite[9.3.48]{Lazarsfeld}, this holds if and only if $H$ has rational singularities.
\end{proof}

\section{Two examples}\label{examples}

In this sections we discuss two examples. We begin with the case of the generic determinantal hypersurface,
for which we show that the inequality in Theorem~\ref{thmB_intro} is an equality.

\begin{eg}\label{eg_det}
Let $n\geq 2$ and $X=\A^{n^2}$ be the affine space of $n\times n$ matrices, with coordinates $x_{i,j}$, for $1\leq i,j\leq n$.
We consider $f={\rm det}(A)$, where $A$ is the matrix $(x_{i,j})_{1\leq i,j\leq n}$. In this case,
 it is well-known that the Bernstein-Sato polynomial of $f$ is given by
$$b_f(s)=\prod_{i=1}^n(s+i)$$
(see, for example, \cite[Appendix]{Kimura}). We thus have $\widetilde{\alpha}_f=2$.

In order to compute $\lct(f,J_f^2)$, we use the arc-theoretic description reviewed in the previous section, together with
the approach in the determinantal case due to Docampo \cite{Docampo}. Note first that $J_f$ is the ideal of $\cO_X(X)$
generated by the $(n-1)$-minors of the matrix $A$. We use the action of $G={\rm GL}_n(\C)\times {\rm GL}_n(\C)$ on
$X$ given by $(g,h)\cdot A=gAh^{-1}$. We have an induced action of $G_{\infty}={\rm GL}_n(\C\llbracket t\rrbracket)\times
{\rm GL}_n(\C\llbracket t\rrbracket)$
on $X_{\infty}$ and we consider the orbits with respect to this action. Since the ideal $(f)+J_f^2$ is preserved by the $G$-action, it follows that
every contact locus of this ideal is a union of $G_{\infty}$-orbits.

Recall that a $G_{\infty}$-orbit in $X_{\infty}$ of finite codimension corresponds to a sequence of integers $\lambda_1\geq\ldots,\geq\lambda_n\geq 0$,
such that the orbit consists of those $n\times n$ matrices with entries in $\C\llbracket t\rrbracket$ that are equivalent via Gaussian elimination to a diagonal matrix having on the diagonal $t^{\lambda_1},\ldots,t^{\lambda_n}$. The codimension of this orbit is $\sum_{i=1}^n\lambda_i(2i-1)$ by
\cite[Theorem~C]{Docampo}. Moreover, it is clear that the order of an arc in this orbit along the ideal $(f)+J_f^2$ is
$$\min\left\{\sum_{i=1}^n\lambda_i, 2\cdot\sum_{i=2}^n\lambda_i\right\}.$$
The description of the log canonical threshold in terms of contact loci (see \cite{ELM}) thus implies
\begin{equation}\label{eq_eg1}
\lct(f,J_f^2)=\min_{\lambda}\frac{\sum_{i=1}^n\lambda_i(2i-1)}{\min\left\{\sum_{i=1}^n\lambda_i, 2\cdot\sum_{i=2}^n\lambda_i\right\}},
\end{equation}
where the minimum is over all $\lambda=(\lambda_1,\ldots,\lambda_n)$, with $\lambda_1\geq\ldots\geq\lambda_n\geq 0$
and $\lambda_2>0$. If we take $\lambda_1=\lambda_2>0$ and $\lambda_3=\ldots=\lambda_n=0$, then the expression on the right-hand side of
(\ref{eq_eg1}) is $2$. In order to see that for all $\lambda$ this expression is $\geq 2$,  note that since $\lambda_1\geq\lambda_2$, we have
$$\sum_{i=1}^n(2i-1)\lambda_i\geq 4\cdot\sum_{i=2}^n\lambda_i.$$
We thus conclude that $\lct(f,J_f^2)=2=\widetilde{\alpha}_f$.
\end{eg}

We next turn to the case of homogeneous diagonal hypersurfaces, where we will see that the inequality
in Theorem~\ref{thmB_intro} can be strict.

\begin{eg}\label{eg_diag}
Let $X=\A^n$, with $n\geq 2$, and $f=x_1^d+\ldots+x_n^d$, for some $d\geq 2$. In this case, it is known that the Berstein-Sato polynomial of $f$ is given by
$$b_f(s)=(s+1)\cdot\prod_{1\leq b_1,\ldots,b_n\leq d-1}\left(s+\sum_{i=1}^n\frac{b_i}{d}\right)$$
(see \cite[Proposition~3.6]{Yano}). In particular, we have $\widetilde{\alpha}_f=\frac{n}{d}$.

We will show that
\begin{equation}\label{eq_eg_diag}
\lct(f,J_f^2)=\min\left\{\frac{n+d-2}{2d-2},\frac{n}{d}\right\}.
\end{equation}
It is clear that we have $J_f=(x_1^{d-1},\ldots,x_n^{d-1})$. Let $\pi\colon Y\to X$ be the blow-up of $X$
at the origin. By symmetry, it is enough to consider the chart $U$
on $Y$ with coordinates $y_1,\ldots,y_n$ such that $x_1=y_1$ and $x_i=y_1y_i$ for $i\geq 2$.
In this chart we have $f\circ\pi\vert_U=y_1^dg$, with $g=1+y_2^d+\ldots+y_n^d$, and $J_f^2\cdot\cO_U=(y_1^{2d-2})$.
We deduce that a log resolution of the ideal $\big((f)+J_f^2\big)\cdot\cO_U$ can be obtained by blowing-up $U$ along the ideal $(g,y_1^{d-2})$ and then resolving
torically the resulting variety. In particular, we see that in order to compute $\lct(f,J_f^2)$ it is enough to consider toric divisors over $Y$,
with respect to the system of coordinates given by $y_1$ and $g$. If $E$ is such a divisor, with ${\rm ord}_E(g)=a$ and ${\rm ord}_E(y_1)=b$,
with $a$ and $b$ nonnegative integers, not both $0$, we have
$${\rm ord}_E\big((f)+J_f^2\big)=\min\{db+a, (2d-2)b\}\quad\text{and}\quad A_X({\rm ord}_E)=nb+a.$$
This implies that
$$\lct(f,J_f^2)=\min_{(a,b)}\frac{nb+a}{\min\{db+a,(2d-2)b\}},$$
where the minimum is over all $(a,b)\in\Z_{\geq 0}^2\smallsetminus\{(0,0)\}$.
It is now a straightforward exercise to deduce the formula (\ref{eq_eg_diag}).

If $d=2$, then we see that $\lct(f,J_f^2)=\frac{n}{2}=\widetilde{\alpha}_f$ (of course, this can be seen directly, since in this case
$(f)+J_f^2=(x_1,\ldots,x_n)^2$). Suppose now that $d\geq 3$. In this case, a straightforward calculation shows that
$$\frac{n+d-2}{2d-2}<\frac{n}{d}\quad\text{if and only if}\quad d<n.$$
 We thus see that if $d\geq n$ (which is precisely the case when the hypersurface defined by $f$ does not have rational
singularities), then $\lct(f,J_f^2)=\frac{n}{d}=\lct(f)$, as expected according to Theorem~\ref{thmA_intro}. On the other hand, if $3\leq d<n$, then
we have a strict inequality $\lct(f,J_f^2)<\widetilde{\alpha}_f$.
\end{eg}

\section{The motivic oscillation index and $\lct(f,J_f^2)$}\label{moi}

We begin by recalling the definition of the motivic oscillation index from \cite{CMN}.
Let $\Qalg$  be the algebraic closure of $\Q$ inside $\C$.
We consider a nonconstant polynomial $f$ in $\Qalg[x_1,\ldots,x_n]$ and a closed subscheme  $Z$ of $\A_{\Qalg}^n$.

 Let $K$ be a number field  such that $f$ and $Z$ are defined over $K$. Choose an integer $N>0$ such that $f$ and a set of generators for the ideal
 defining $Z$ lie in $\cO[1/N]$, where $\cO$ is the ring of integers of $K$. For any prime $p$ not dividing $N$, any completion $L$  of $K$ above $p$,
and any nontrivial additive character $\psi\colon L\to \C^\times$, consider the following integral
\begin{equation}\label{eq:EfZ}
E_{f,L,\psi}^{Z} := \int_{\{x\in \cO_{L}^n\mid \overline x\in Z(k_L) \} }  \psi\big(f(x)\big)|dx| ,
\end{equation}
where $\cO_L$ stands for the valuation ring of $L$ with residue field $k_L$,
$\overline x$ stands for the image of $x$ under the natural projection $\cO_{L}^n \to k_L^n$, and $|dx|$ is the Haar measure on $L^n$ normalized so that
$\cO_L^n$ has measure $1$. Writing $q_L$ for the number of elements of $k_L$, let $\sigma_L$ be the supremum over all $\sigma\geq 0$ such that
$$
|E_{f,L,\psi}^{Z}| \leq c q_L^{-m\sigma}
$$
for some $c=c(\sigma,f,L)$ which is independent of $\psi$, and where $m$ is such that $\psi$ is trivial on $\cM_L^m$ and nontrivial on $\cM_L^{m-1}$,
with $\cM_L$ being the maximal ideal of $\cO_L$.  Note that $\sigma_L$ can equal $+\infty$; this is the case precisely when
the morphism $\A_{\Qalg}^n\to\A_{\Qalg}^1$ defined by $f$ is smooth in an open neighborhood of $Z$.
We define the $K$-oscillation index of $f$ along $Z$ as
$$
\oi_Z(f) := \lim_{M\to\infty}\inf_{L} \sigma_L,
$$
where the infimum is taken over all non-Archimedean completions $L$ of $K$ above primes $p_L$ with $p_L>M$. 
Finally, the motivic oscillation index of $f$ along $Z$ is defined as
$$
\moi_Z(f) := \inf_{K} \oi_Z(f),
$$
where $K$ runs over all number fields satisfying the above conditions.
This definition corresponds to the definition given in \cite[Section~3.4]{CMN} by Igusa's work (see \cite{Denef}), which relates upper bounds for oscillating integrals with nontrivial poles of local zeta functions.
Note that the variant $\moi(f)$ we considered in the Introduction corresponds to the case when $Z$ is the hypersurface defined by $f$ (note the small change in notation for $\moi(f)$, compared to \cite{CMN}).

The following is the main result of this section.

\begin{thm}\label{thm_moi2}
If $f\in\Qalg[x_1,\ldots,x_n]$ is a nonconstant polynomial and $Z$ is a closed subscheme of the hypersurface defined by $f$, then
\begin{equation}\label{eq:moifJ2}
\moi_Z(f) \geq \lct_{Z}(f,J_f^2).
\end{equation}
In addition, if $\lct_{Z}(f,J_f^2)\leq 1$, then equality holds in (\ref{eq:moifJ2});
in this case, we also have $\oi_Z(f)  = \lct_{Z}(f,J_f^2)$
for every number field $K$ such that $f$ and $Z$ are defined over $K$.
\end{thm}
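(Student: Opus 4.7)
The plan is to establish the main inequality $\moi_Z(f)\ge\lct_Z(f,J_f^2)$ via a $p$-adic stationary-phase argument on a log resolution of $(f)+J_f^2$, and then to deduce the equality when $\lct_Z(f,J_f^2)\le 1$ from Theorem~\ref{thmA_intro}(i) combined with the identity $\lct_Z(f)=\min\{\oi_Z(f),1\}$ of \cite[Proposition~3.10]{CMN}.

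For the main inequality, fix a number field $K$ over which $f$ and $Z$ are defined, and take a log resolution $\pi\colon Y\to X=\A^n$ of $(f)+J_f^2$ defined over $\cO[1/N]$ for suitable $N$. Writing $\pi^{-1}((f)+J_f^2)=\cO_Y(-\sum_i m_iE_i)$ with $m_i=\min(a_i,2b_i)$, where $a_i=\ord_{E_i}(f)$ and $b_i=\ord_{E_i}(J_f)$, and $K_{Y/X}=\sum_i k_iE_i$, the toric formula gives $\lct_Z(f,J_f^2)=\min_{i\in S}(k_i+1)/m_i$, where $S=\{i\mid c_X(\ord_{E_i})\cap Z\neq\emptyset\}$. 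It suffices to show that $\sigma_L\ge\lct_Z(f,J_f^2)$ for every completion $L$ of $K$ above a prime of sufficiently large residue characteristic. For such $L$, pull $E_{f,L,\psi}^Z$ back via $\pi$ using the change-of-variables formula and localize on a toric chart with coordinates $y_1,\ldots,y_n$ in which $f\circ\pi=u\prod_i y_i^{a_i}$ for a unit $u$ and each $(\partial f/\partial x_j)\circ\pi$ is divisible by $\prod_i y_i^{b_i}$, with equality realized by some $j=j_0(y)$. Split the chart according to whether $a_i\le 2b_i$ or $a_i>2b_i$. In the regime $a_i\le 2b_i$ (so $m_i=a_i$), the trivial bound $|\psi(f\circ\pi)|\le 1$ combined with the standard computation of $\int|f|^s$ through the resolution bounds the contribution by $c\,q_L^{-m(k_i+1)/a_i}$. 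In the regime $a_i>2b_i$ (so $m_i=2b_i$), perform integration by parts in the coordinate $x_{j_0}$: a $p$-adic stationary-phase / Gauss-sum estimate, driven by the quadratic Taylor contribution of $f\circ\pi$ that is controlled by $|J_f|^2$, bounds the contribution by $c\,q_L^{-m(k_i+1)/(2b_i)}$. Summing chart contributions gives $\sigma_L\ge\lct_Z(f,J_f^2)$ uniformly in $\psi$, hence $\moi_Z(f)\ge\lct_Z(f,J_f^2)$.

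For the equality statement, suppose $\lct_Z(f,J_f^2)\le 1$. Applying Theorem~\ref{thmA_intro} locally at a point of $Z$ where the threshold is attained, the hypersurface $V(f)$ does not have rational singularities there, and an easy case analysis upgrades this to the global identity $\lct_Z(f,J_f^2)=\lct_Z(f)$. By the per-field version of \cite[Proposition~3.10]{CMN}, $\lct_Z(f)=\min\{\oi_Z(f),1\}$ for every admissible $K$; combining with the already established $\oi_Z(f)\ge\moi_Z(f)\ge\lct_Z(f,J_f^2)$ yields $\oi_Z(f)=\moi_Z(f)=\lct_Z(f,J_f^2)$ for every such $K$.

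The main obstacle is the second regime in the chart-wise estimate: justifying that a single $p$-adic stationary-phase computation produces the exponent $2b_i$ rather than merely $b_i$. This reflects the fact that in the regime $a_i>2b_i$ it is the quadratic (and not the linear) Taylor term of $f\circ\pi$ that governs the cancellation, and extracting a Gauss-sum estimate of the expected magnitude $q_L^{-m(k_i+1)/(2b_i)}$ uniformly in the character $\psi$ of conductor $m$ and the completion $L$ of large residue characteristic is the technical heart of the argument.
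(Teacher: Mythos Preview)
Your treatment of the equality case is essentially the paper's: both use Theorem~\ref{thmA_intro} to pass from $\lct_Z(f,J_f^2)\leq 1$ to non-rational singularities near $Z$ and to $\lct_Z(f,J_f^2)=\lct_Z(f)$, and then invoke \cite[Proposition~3.10]{CMN}. (One minor point: at the borderline $\lct_Z(f,J_f^2)=1$, the identity $\lct_Z(f)=\min\{\oi_Z(f),1\}$ alone only yields $\oi_Z(f)\geq 1$; one needs the full strength of \cite[Proposition~3.10]{CMN} under non-rational singularities, which gives $\moi_Z(f)=\lct_Z(f)$ outright.)

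For the main inequality, however, the paper takes a completely different and much shorter route that avoids your obstacle. It does \emph{not} pull back to a log resolution of $(f)+J_f^2$. Starting from the known reduction of $E^Z_{f,L,\psi}$ to the region $\{\ord f(x)\geq m-1\}$, the paper shows by a direct orthogonality argument on $\cO_L^n$ that the integral over any ball $x_0+(\cM_L^{\overline m})^n$, with $\overline m=\lceil m/2\rceil$, already vanishes whenever $\ord J_f^2(x_0)<m-1$: modulo $\cM_L^m$ the Taylor expansion of $f$ at $x_0$ on such a ball is affine with some linear coefficient of order $<(m-1)/2$, and the character integral is zero. Hence $|E^Z_{f,L,\psi}|$ is bounded by the Haar volume of $\{\overline x\in Z(k_L),\ \ord\big((f,J_f^2)(x)\big)\geq m-1\}$, and the standard link between such volumes and $\lct_Z(f,J_f^2)$ (via \cite{VeysZ}) finishes. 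The exponent $2$ on $J_f$ thus comes from the \emph{linear} Taylor term at radius $\overline m\approx m/2$, not from any quadratic-phase Gauss-sum mechanism.

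Within your framework the obstacle you flag is real and not obviously removable. After the change of variables, stationary phase lives in the chart coordinates $y$; since $f\circ\pi=u\prod_j y_j^{a_j}$ there, one has $\ord_{E_i}\big(\nabla_y(f\circ\pi)\big)=a_i-1$, so the chart-wise oscillatory estimate recovers only the usual Igusa exponent $(k_i+1)/a_i$. ``Integration by parts in $x_{j_0}$'' is not available once you are in the $y$-variables, and there is no evident way to convert the order-$b_i$ control on $(\partial f/\partial x_{j_0})\circ\pi$ into the uniform bound $q_L^{-m(k_i+1)/(2b_i)}$ you need. The paper's argument is designed precisely to sidestep this by never leaving $\cO_L^n$.
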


We recall that for every nonzero ideal ${\mathfrak a}$ in $\Qalg[x_1,\ldots,x_n]$, by definition
$$\lct_Z({\mathfrak a})=\max_U\lct({\mathfrak a}\vert_U),$$
where the maximum is over all open neighborhoods $U$ of $Z$. We also note that if ${\mathfrak a}_{\C}$
is the extension of ${\mathfrak a}$ to $\C[x_1,\ldots,x_n]$, then $\lct({\mathfrak a})=\lct({\mathfrak a}_{\C})$.
We will derive Theorem~\ref{thm_moi2} from the following proposition and the definition of the oscillation index.

\begin{prop}\label{prop:E^ZfJf}
Let $K$ be any number field such that $f$ and $Z$ are defined over $K$ and let
$L$ be a non-Archimedean completion of $K$ above a prime $p_L$, with residue field $k_L$ with $q_L$ elements.
If $p_L$ is large enough (in terms of the data $f,Z,K$), then for every $\varepsilon>0$, there exists a constant $c = c(f,Z,L,\varepsilon)$ such that for each nontrivial additive character $\psi\colon L\to\C^\times$ we have
$$
|E^{Z}_{f,L,\psi}| <  c q_L^{- m\sigma }\quad\text{for}\quad
\sigma =  \lct_Z(f,J_f^2) - \varepsilon,
$$
where $m=m(\psi)$ is such that $\psi$ is trivial on $\cM_L^m$ and nontrivial on $\cM_L^{m-1}$.
\end{prop}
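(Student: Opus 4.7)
My plan is to prove Proposition~\ref{prop:E^ZfJf} by the Igusa--Denef strategy of pulling back the oscillating integral along a log resolution, combined with a stationary phase estimate on each chart.

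First, I would pick an embedded log resolution $\pi\colon Y\to\A^n_K$ of both $(f)$ and $(f)+J_f^2$ simultaneously, defined over some finite extension $K'/K$. For all but finitely many primes of $K'$ this resolution has good reduction, so once $p_L$ is large (possibly after an unramified base change absorbed into the ``$p_L$ large enough'' hypothesis) the change of variables $x=\pi(y)$ applies to give
\[
E^Z_{f,L,\psi}=\int_{\pi^{-1}(U_Z)}\psi\bigl(f(\pi(y))\bigr)\,\bigl|\mathrm{Jac}(\pi)(y)\bigr|\,|dy|,
\]
where $U_Z=\{x\in\cO_L^n:\bar x\in Z(k_L)\}$. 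Cover the compact set $\pi^{-1}(U_Z)$ by finitely many charts with local coordinates $y_1,\dots,y_n$ on which the exceptional divisor is $y_1\cdots y_r=0$; in each chart $f\circ\pi=u\prod_{i=1}^r y_i^{a_i}$ with $u$ a unit, $\bigl|\mathrm{Jac}(\pi)\bigr|=\prod_{i=1}^r|y_i|^{k_i}$, and $c_i:=\min(a_i,2b_i)$ (with $a_i=\ord_{E_i}(f)$, $b_i=\ord_{E_i}(J_f)$, $k_i=\ord_{E_i}(K_{Y/X})$) gives $\lct_Z(f,J_f^2)=\min_i(k_i+1)/c_i$ over components whose image meets $Z$.

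Next, on each chart I stratify by the tuple $(v_1,\dots,v_r)=(v(y_1),\dots,v(y_r))$. Writing $y_i=\varpi_L^{v_i}w_i$ with $w_i\in\cO_L^\times$ and integrating the free coordinates trivially, the chart contribution becomes a sum over such tuples of
\[
\prod_{i=1}^r q_L^{-v_i(k_i+1)}\cdot I_{(v_i)},\qquad I_{(v_i)}=\int_{(\cO_L^\times)^r\times\cO_L^{n-r}}\psi\Bigl(\widetilde u(w)\prod_i\varpi_L^{v_ia_i}w_i^{a_i}\Bigr)\,dw.
\]
For tuples with $\sum_i v_i a_i\ge m$, the character is trivial on the integration domain, so $|I_{(v_i)}|\le 1$; a geometric-series summation then yields a total contribution of size $\ll q_L^{-m\min_i(k_i+1)/a_i}$. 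For tuples with $\sum_i v_i a_i<m$, I would apply stationary phase to $I_{(v_i)}$: Taylor-expanding $f\circ\pi$ around a typical point of a ball of radius $q_L^{-s}$ with $2s\ge m$, the linear term has valuation controlled by the pullback of $J_f$ and the quadratic error is controlled by the pullback of $J_f^2$. Because $J_f^2\cdot\cO_Y\subseteq(\prod y_i^{c_i})$, one gets vanishing of $I_{(v_i)}$ outside a thin set, and the surviving contribution is at most $c\,q_L^{-m(k_i+1)/c_i}$, summing to $c\,q_L^{-m(\lct_Z(f,J_f^2)-\varepsilon)}$.

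The principal obstacle will be executing the stationary phase step so that the exponent $c_i=\min(a_i,2b_i)$ emerges rather than just $a_i$: when $2b_i<a_i$ the pointwise gradient of $f\circ\pi$ alone is too small to force cancellation, and one must use the quadratic Taylor term together with the inclusion $J_f^2\cO_Y\subseteq(\prod y_i^{c_i})$ to get the needed decay. Two further technical matters are uniformity of the implicit constant $c$ in $\psi$, which is handled by the finiteness of the atlas plus compactness of $Y(\cO_L)$, and the residue condition $\bar x\in Z(k_L)$, which only restricts which charts and strata contribute and so is absorbed into the same estimate.
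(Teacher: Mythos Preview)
Your proposal has a genuine gap, precisely at the point you flag as ``the principal obstacle.'' When you Taylor-expand $f\circ\pi$ in the $y$-coordinates on a ball of radius $q_L^{-s}$, the linear term is $\nabla_y(f\circ\pi)=(\nabla_x f)\circ\pi\cdot D\pi$, and the quadratic term is the $y$-Hessian of $f\circ\pi$, which by the chain rule mixes second $x$-derivatives of $f$ with entries of $D\pi$ and $D^2\pi$. Neither of these is governed by the pullback ideals $J_f\cdot\cO_Y$ or $J_f^2\cdot\cO_Y$ in the way you need: membership in $J_f\cdot\cO_Y$ gives an \emph{upper} bound on the size of the linear term (hence no cancellation), and the quadratic error is simply not in $J_f^2\cdot\cO_Y$. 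In the monomial chart one has $\ord\big(\nabla_y(f\circ\pi)\big)=\sum_i a_i v_i-\max_j v_j$, which is the standard Igusa input and yields only the exponents $a_i$, not $c_i=\min(a_i,2b_i)$. I do not see how to extract $c_i$ from stationary phase carried out \emph{after} pulling back.

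The paper avoids this entirely by reversing the order of operations. One performs the Taylor expansion directly in the original $x$-coordinates, on balls $x_0+(\cM_L^{\overline m})^n$ with $\overline m=\lceil m/2\rceil$. Since $2\overline m\geq m$, all quadratic and higher terms land automatically in $\cM_L^m$ and are killed by $\psi$, with no hypothesis on second derivatives whatsoever. The integral over the ball then reduces to a character sum in the linear term $\sum_i(\partial f/\partial x_i)(x_0)\,y_i$, which vanishes by orthogonality as soon as $\min_i\ord\big((\partial f/\partial x_i)(x_0)\big)<(m-1)/2$, i.e.\ $\ord\big(J_f^2(x_0)\big)<m-1$. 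Thus $E^Z_{f,L,\psi}$ is supported on $\{x:\ord\big((f,J_f^2)(x)\big)\geq m-1\}$, and one is left with a pure volume estimate. Only at this last step does a log resolution enter, via the known relation (Veys--Z\'u\~niga-Galindo) between such volumes and $\lct_Z(f,J_f^2)$. The ``$2$'' in $J_f^2$ thus comes from the choice $\overline m\approx m/2$, not from any quadratic Taylor coefficient, and this is what your resolution-first approach misses.
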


\begin{proof}[Proof of Proposition \ref{prop:E^ZfJf}]
By Igusa's results from \cite{Igusa} and \cite{Denef}, recalled in \cite[Propositions~3.1 and 3.4]{CMN}, we have for each $\psi$ with $m=m(\psi)>1$
\begin{equation}\label{eq:EfZ}
E_{f,L,\psi}^{Z} = \int_{\{x\in \cO_{L}^n\mid \overline x\in Z(k_L),\ \ord f(x)\geq m-1 \} }  \psi\big(f(x)\big)|dx| ,
\end{equation}
where $\overline x$ stands for the image of $x$ under the natural projection $\cO_{L}^n \to k_L^n$, and where $p_L$ is assumed to be large. We now show
that we also have
\begin{equation}\label{eq:EfZJ}
E_{f,L,\psi}^{Z}= \int_{ \{x\in \cO_{L}^n\mid \overline x\in Z(k_L),\ \ord(f(x),J_f^2(x))\geq m-1 \} }  \psi\big(f(x)\big)|dx|,
\end{equation}
by adapting the proof of \cite[Proposition~2.1]{CHerr}. Here, the condition $\ord\big((f(x),J_f^2(x)\big)\geq m-1$ for $x\in \cO_L^n$ means that we have $\ord\big(g(x)\big)\geq m-1$ for every polynomial $g$ in the ideal of the polynomial ring over $\cO_L$ generated by $f$ and $J_f^2$.  We
 use the orthogonality of characters in the form
\begin{equation}\label{eq:orthchar}
\int_{z \in \cM_L^{m-1}} \psi(z)|dz| = 0
\end{equation}
for $m=m(\psi)$ and combine this with the Taylor expansion, as follows. Let $x_0$ be a point in $\cO_L^m$ such that $\ord f(x_0)\geq m-1$ and suppose that
$\ord\big(J_f^2(x_0)\big) < m-1$. In order to prove (\ref{eq:EfZJ}), it is enough to show that for every such $x_0$, we have
\begin{equation}\label{eq:orth}
\int_{x\in x_0+(\cM_L^{\overline m})^n} \psi\big(f(x)\big) |dx| = 0,
\end{equation}
with $\overline m$ equal to $m/2$ if $m$ is even, and equal to $(m+1)/2$ if $m$ is odd.
If $x=y+x_0$, with $y=(y_1,\ldots,y_n)\in (\cM_L^{\overline m})^n$, then we write the Taylor expansion of $f$ around $x_0$:
$$
f(x) =  f(y+x_0) =   a_0 + \sum_{i=1}^n a_i y_i + \mbox{higher order terms in } y.
$$
Since $p_L$ is assumed to be large and $y\in (\cM_L^{\overline m})^n$, we see  that
$$
f(x) \equiv a_0 + \sum_{i=1}^n a_i y_i \quad (\bmod \cM_L^m)
$$
and thus, since $\psi$ is trivial on $\cM_L^m$, we obtain
\begin{equation}\label{eq:linear}
\int_{x\in x_0+(\cM_L^{\overline m})^n} \psi\big(f(x)\big) |dx| = \int_{y\in (\cM_L^{\overline m})^n} \psi \left(a_0 + \sum_{i=1}^n a_i y_i\right) |dy|.
\end{equation}
Note that the condition $\ord\big(J_f^2(x_0)\big) <m-1$ implies that $\min_i \ord a_i < (m-1)/2$. Since $(m-1)/2 + {\overline m} \leq m$, using the orthogonality relation (\ref{eq:orthchar}), we deduce (\ref{eq:orth}) from (\ref{eq:linear}). This completes the proof of (\ref{eq:EfZJ}).

By (\ref{eq:EfZJ}), using the fact that $| \psi(x) |=1$ for all $x$ in $L$, we get
$$
|E_{f,L,\psi}^{Z} | \leq \Vol\big( \{ x\in \cO_{L}^n\mid \overline x \in Z(k_L),\ \ord(f(x),J_f^2(x))\geq m-1 \}\big)  ,
$$
where the volume is taken with respect to the Haar measure $|dx|$ on $L^n$.
Therefore the existence of $c$ as desired follows from Corollary 2.9 of \cite{VeysZ} and the two sentences following that corollary, which give a link between the log canonical threshold at $Z$ of any ideal ${\mathfrak a}$ of $\cO_L[x]$ and the volume of
 $\{ x\in \cO_{L}^n\mid \overline x\in Z(k_L),\ \ord\big({\mathfrak a}(x)\big)\geq m \}$ uniformly in
 $m>1$. 
\end{proof}

\begin{proof}[Proof of Theorem \ref{thm_moi2}]
The formula (\ref{eq:moifJ2}) follows from Proposition \ref{prop:E^ZfJf}
and the definition of $\moi_Z(f)$. Moreover, if $\lct_{Z}(f,J_f^2)\leq 1$, then
part ii) of Theorem~\ref{thmA_intro} 
gives that the hypersurface defined by $f$ does not have
rational singularities in any open neighborhood of $Z$.
Under this last condition, it follows from 
\cite[Proposition~3.10]{CMN} that $\moi_Z(f) =\lct_Z(f)$, which together with (\ref{eq:moifJ2})  and $\lct_Z(f) \leq \lct_{Z}(f,J_f^2)$ implies $\moi_Z(f)  = \lct_{Z}(f,J_f^2) = \lct_Z(f)$.   The fact that
when $\lct_{Z}(f,J_f^2)\leq 1$ we also have
$\oi_Z(f) =\lct_Z(f)$  follows by slightly adapting the proof of \cite[Proposition~3.10]{CMN}. 
\end{proof}

\section*{References}
\begin{biblist}


\bib{CHerr}{article}{
   author={Cluckers, R.},
   author={Herremans, A.},
   title={The fundamental theorem of prehomogeneous vector spaces modulo
   ${\scr P}^m$},
   note={With an appendix by F. Sato},
   journal={Bull. Soc. Math. France},
   volume={135},
   date={2007},
   number={4},
   pages={475--494},
}

\bib{CMN}{article}{
author={Cluckers, R.},
author={Musta\c{t}\u{a}, M.},
author={Nguyen, K. H.},
title={Igusa's conjecture for exponential sums: optimal estimates for non-rational singularities},
journal={preprint arXiv:1810.11340},
     date={2018},
}

\bib{Denef}{article}{
   author={Denef, J.},
   title={Report on Igusa's local zeta function},
   note={S\'{e}minaire Bourbaki, Vol. 1990/91},
   journal={Ast\'{e}risque},
   number={201-203},
   date={1991},
   pages={Exp. No. 741, 359--386 (1992)},
}

\bib{DenefLoeser}{article}{
   author={Denef, J.},
   author={Loeser, F.},
   title={Germs of arcs on singular algebraic varieties and motivic
   integration},
   journal={Invent. Math.},
   volume={135},
   date={1999},
   number={1},
   pages={201--232},
}

\bib{Docampo}{article}{
   author={Docampo, R.},
   title={Arcs on determinantal varieties},
   journal={Trans. Amer. Math. Soc.},
   volume={365},
   date={2013},
   number={5},
   pages={2241--2269},
}

\bib{ELM}{article}{
   author={Ein, L.},
   author={Lazarsfeld, R.},
   author={Musta\c{t}\v{a}, M.},
   title={Contact loci in arc spaces},
   journal={Compos. Math.},
   volume={140},
   date={2004},
   number={5},
   pages={1229--1244},
}

\bib{EM}{article}{
   author={Ein, L.},
   author={Musta\c{t}\u{a}, M.},
   title={Jet schemes and singularities},
   conference={
      title={Algebraic geometry---Seattle 2005. Part 2},
   },
   book={
      series={Proc. Sympos. Pure Math.},
      volume={80},
      publisher={Amer. Math. Soc., Providence, RI},
   },
   date={2009},
   pages={505--546},
}

\bib{Igusa}{article}{
   author={Igusa, J.-i.},
   title={Complex powers and asymptotic expansions. II. Asymptotic
   expansions},
   journal={J. Reine Angew. Math.},
   volume={278/279},
   date={1975},
   pages={307--321},
}

\bib{Kimura}{book}{
   author={Kimura, T.},
   title={Introduction to prehomogeneous vector spaces},
   series={Translations of Mathematical Monographs},
   volume={215},
   note={Translated from the 1998 Japanese original by Makoto Nagura and
   Tsuyoshi Niitani and revised by the author},
   publisher={American Mathematical Society, Providence, RI},
   date={2003},
}

\bib{Kollar}{article}{
   author={Koll\'ar, J.},
   title={Singularities of pairs},
   conference={
      title={Algebraic geometry---Santa Cruz 1995},
   },
   book={
      series={Proc. Sympos. Pure Math.},
      volume={62},
      publisher={Amer. Math. Soc., Providence, RI},
   },
   date={1997},
   pages={221--287},
}

\bib{Lazarsfeld}{book}{
       author={Lazarsfeld, R.},
       title={Positivity in algebraic geometry II},
       series={Ergebnisse der Mathematik und ihrer Grenzgebiete},
       volume={49},
       publisher={Springer-Verlag, Berlin},
       date={2004},
}

\bib{Malgrange}{article}{
   author={Malgrange, B.},
   title={Le polynome de Bernstein d'une singularit\'e isol\'ee},
   conference={
      title={Fourier integral operators and partial differential equations},
      address={Colloq. Internat., Univ. Nice, Nice},
      date={1974},
   },
   book={
      publisher={Springer, Berlin},
   },
   date={1975},
   pages={98--119. Lecture Notes in Math., Vol. 459},
}

\bib{Mustata}{article}{
   author={Musta\c{t}\u{a}, M.},
   title={IMPANGA lecture notes on log canonical thresholds},
   note={Notes by Tomasz Szemberg},
   conference={
      title={Contributions to algebraic geometry},
   },
   book={
      series={EMS Ser. Congr. Rep.},
      publisher={Eur. Math. Soc., Z\"{u}rich},
   },
   date={2012},
   pages={407--442},
}

\bib{MP}{article}{
      author={Musta\c t\u a, M.},
      author={Popa, M.},
      title={Hodge ideals for ${\mathbf Q}$-divisors, $V$-filtration, and minimal exponent},
      journal={preprint arXiv:1807.01935},
      date={2018},
}

\bib{Saito-B}{article}{
   author={Saito, M.},
   title={On $b$-function, spectrum and rational singularity},
   journal={Math. Ann.},
   volume={295},
   date={1993},
   number={1},
   pages={51--74},
}

\bib{Steenbrink}{article}{
   author={Steenbrink, J. H. M.},
   title={Mixed Hodge structure on the vanishing cohomology},
   conference={
      title={Real and complex singularities},
      address={Proc. Ninth Nordic Summer School/NAVF Sympos. Math., Oslo},
      date={1976},
   },
   book={
      publisher={Sijthoff and Noordhoff, Alphen aan den Rijn},
   },
   date={1977},
   pages={525--563},
}

\bib{Varchenko2}{article}{
   author={Varchenko, A. N.},
   title={Asymptotic Hodge structure on vanishing cohomology},
   journal={Izv. Akad. Nauk SSSR Ser. Mat.},
   volume={45},
   date={1981},
   number={3},
   pages={540--591},
}

\bib{Varchenko}{article}{
   author={Varchenko, A. N.},
   title={The complex singularity index does not change along the stratum
   $\mu ={\rm const}$},
   journal={Funktsional. Anal. i Prilozhen.},
   volume={16},
   date={1982},
   number={1},
   pages={1--12, 96},
}

\bib{Yano}{article}{
author={Yano, T.},
   title={On the theory of $b$-functions},
   journal={Publ. Res. Inst. Math. Sci.},
   volume={14},
   date={1978},
   number={1},
   pages={111--202},
}

\bib{VeysZ}{article}{
    AUTHOR = {Veys, W.},
        AUTHOR = {Z{\'u}{\~n}iga-Galindo, W. A.},
     TITLE = {Zeta functions for analytic mappings, log-principalization of ideals, and {N}ewton polyhedra},
   JOURNAL = {Trans. Amer. Math. Soc.},
    VOLUME = {360},
      date = {2008},
    NUMBER = {4},
     PAGES = {2205--2227},
}

\end{biblist}

\end{document}